\newtheorem{theorem}{Theorem}[section]
\newtheorem{lemma}[theorem]{Lemma}
\newtheorem{proposition}[theorem]{Proposition}
\newtheorem{corollary}[theorem]{Corollary}
\theoremstyle{definition}
\theoremstyle{remark}
\newtheorem{remark}[theorem]{Remark}
\newcommand{\tr}{{\rm{tr}}}
\numberwithin{equation}{section}
\newtheorem*{theorem*}{Theorem}
\begin{document}
\title[Convex functions and means of matrices ]
{ Convex functions and means of matrices }
\author[M. Sababheh]{M. Sababheh}
\address{ Department of Basic Sciences, Princess Sumaya University For Technology, Al Jubaiha, Amman 11941, Jordan.}
\email{\textcolor[rgb]{0.00,0.00,0.84}{sababheh@psut.edu.jo, sababheh@yahoo.com}}

\subjclass[2010]{15A39, 15A60, 15B48, 26A51, 47A30, 47A60, 47A63.}

\keywords{Convex functions, Heinz means, Means inequalities, Unitarily invariant norm inequalities, Young's inequality.}
\maketitle
\begin{abstract}
In this article, we prove that convex functions and log-convex functions obey certain general refinements that lead to several refinements and reverses of well known inequalities for matrices, including Young's inequality, Heinz inequality, the arithmetic-harmonic and the geometric-harmonic mean inequalities.
\end{abstract}
\section{introduction}
For $f:\mathbb{R}\to\mathbb{R}$ and $a<b$, let $L_{f,a,b}$ denote the line determined by the points $(a,f(a))$ and $(b,f(b)).$ That is,
\begin{equation}\label{def_of_affine}
L_{f,a,b}(x)=\frac{b-x}{b-a}f(a)+\frac{x-a}{b-a}f(b).
\end{equation}
A function $f:\mathbb{R}\to\mathbb{R}$ is said to be convex if $f(\alpha x_1+\beta x_2)\leq \alpha f(x_1)+\beta f(x_2)$ for all $x_1,x_2\in\mathbb{R}$ and $\alpha,\beta\geq 0$ satisfying $\alpha+\beta=1.$ Geometrically, the graph of the convex function on an interval $[a,b]$ lies under $L_{f,a,b}$. However, it is above $L_{f,a,b}$ outside the interval $[a,b].$ That is, if $f:\mathbb{R}\to\mathbb{R}$ is convex, then
\begin{equation}\label{property_of_convex_outside}
\left\{\begin{array}{cc}f(x)\leq L_{f,a,b}(x),&x\in [a,b]\\ f(x)\geq L_{f,a,b}(x),&x\in\mathbb{R}\backslash(a,b)\end{array}\right..
\end{equation}

Convex functions and their properties are among the most active research areas in Mathematics, due to their applications in almost all branches of Mathematical sciences; including operator theory, optimization and  applied mathematics.

In this article, we are interested in the applications of convex functions in operator theory, and in particular the applications to the different means; such as the arithmetic, geometric and harmonic means defined, respectively for positive numbers $x$ and $y$, as follows
$$
x\nabla_{\nu}y=(1-\nu)x+\nu y, x\#_{\nu}y=x^{1-\nu}y^{\nu}\;{\text{and}}\;x!_{\nu}y=((1-\nu)x^{-1}+\nu y^{-1})^{-1},
$$
for $0\leq \nu\leq 1.$ When $\nu=\frac{1}{2},$ we drop the $\nu$ in the above definitions. The study of inequalities governing these means has attracted numerous researchers. We refer the reader to \cite{mojtaba,kittanehmanasreh,liao,sabchoiref,saboam,xing,zhao,zuo} as a sample of some recent work on the various means inequalities.

Among the most well established inequalities in this regard are the weighted arithmetic-geometric, arithmetic-harmonic and geometric-harmonic mean inequalities, which state respectively
$$x\#_{\nu}y\leq x\nabla_{\nu}y, x!_{\nu}y\leq x\nabla_{\nu}y\;{\text{and}}\;x!_{\nu}b\leq x\#_{\nu}b, x,y>0, 0\leq \nu\leq 1.$$
Generalizing these inequalities to matrices is indeed as important as the inequalities themselves. In the sequel, $\mathbb{M}_n$ will denote the algebra of $n\times n$ complex matrices, $\mathbb{M}_n^{+}$ will denote the cone of $\mathbb{M}_n$ consisting of  positive semidefinite matrices and $\mathbb{M}_n^{++}$ will denote the strictly positive matrices in $\mathbb{M}_n$. That is, $A\in\mathbb{M}_n^{+}$ if $\left<A x, x\right> \geq 0$ for all $x\in \mathbb{C}^n,$ while $A\in\mathbb{M}_n^{++}$ if $\left<A x, x\right> > 0$ for all nonzero $x\in \mathbb{C}^n.$ For two Hermitian matrices $X,Y\in\mathbb{M}_n$, we say that $X\leq Y$ to mean that $Y-X\in\mathbb{M}_n^{+}.$ When $A,B\in \mathbb{M}_n^{++}$, the above mean inequalities have their matrix versions as follows
$$A\#_{\nu}B\leq A\nabla_{\nu}B, A!_{\nu}B\leq A\nabla_{\nu}B\;{\text{and}}\;A!_{\nu}B\leq A\#_{\nu}B, 0\leq \nu\leq 1,$$
where
$$A\nabla_{\nu}B=(1-\nu)A+\nu B, A\#_{\nu}B=A^{\frac{1}{2}}\left(A^{-\frac{1}{2}}BA^{-\frac{1}{2}}\right)^{\nu}A^{\frac{1}{2}}$$ and $$A!_{\nu}B=\left((1-\nu)A^{-1}+\nu B^{-1}\right)^{-1}.$$

The requirement $A\in\mathbb{M}_n^{++}$ is needed to guarantee invertibility.\\
These inequalities have been studied extensively in the literature, see the references, where refinements and reversals have been found.

In this article, we are mainly concerned with the reversed versions of these inequalities. However, in our study we obtain the inequalities as consequences of a reversed version for any convex function. Our first main result is the inequality
\begin{eqnarray}\label{main_introduction}
\nonumber(1+\nu)f(a)-\nu f(b)&+&\sum_{j=1}^{N}2^{j}\nu\left[  \frac{f(a)+f\left(\frac{(2^{j-1}-1)a+b}{2^{j-1}}\right)}{2}-f\left(\frac{(2^j-1)a+b}{2^j}\right)\right]\\
&\leq& f\left((1+\nu)a-\nu b\right), \nu\geq 0, a<b,
\end{eqnarray}
for the convex function $f:\mathbb{R}\to\mathbb{R}.$ This inequality is a considerable refinement of the well known inequality that $(1+\nu)f(a)-\nu f(b)\leq f\left((1+\nu)a-\nu b\right),$ valid for the convex function $f$.\\
Then we prove the corresponding inequality for $\nu\leq -1$, and as direct consequences, we obtain two refinements for log-convex functions.

As an application, we apply these inequalities to different convex functions obtaining different generalizations, reversals and refinements of recently proved inequalities in the literature; for both numbers and matrices. We emphasize that these inequalities, treated in this article, are obtained as special cases of   \eqref{main_introduction}, and hence we obtain multiple term refinements, unlike most known results in the literature where one has one or two refining terms.

Among many other results, we prove that
$$(1+\nu)x-\nu y+\sum_{j=1}^{N}2^{j-1}\nu\left(\sqrt{x}-\sqrt[2^j]{x^{2^{j-1}-1}y}\right)^2\leq x^{1+\nu}y^{-\nu}, x,y,\nu>0,$$
$$A\nabla_{-\nu}B+\sum_{j=1}^{N}2^{j-1}\nu\left(A-2A\#_{2^{-j}}B+A\#_{2^{1-j}}B\right)\leq A\#_{-\nu}B, A,B\in\mathbb{M}_n^{++}, \nu>0,$$
and
\begin{eqnarray*}
\tr\left((1+\nu)A-\nu B\right)&+&\sum_{j=1}^{N}2^{j-1}\nu \;\tr\left(A+A^{1-2^{1-j}}B^{2^{1-j}}-2A^{1-2^{-j}}B^{2^{-j}}\right)\\
&\leq&\tr\left(A^{1+\nu}B^{-\nu}\right), A,B\in\mathbb{M}_n^{++}, \nu>0.
\end{eqnarray*}
The above results provide refinements of the ones in \cite{mojtaba}.

Moreover, we study the convexity and monotonicity of the Heinz mean $f(\nu)=\||A^{\nu}XB^{1-\nu}+A^{1-\nu}XB^{\nu}\||$, where $A,B\in\mathbb{M}_n^{++}, X\in\mathbb{M}_n$ and $\||\;\;\||$ is a unitarily invariant norm. In particular, we prove that $f$ is convex on $\mathbb{R}$, decreasing when $\nu\leq \frac{1}{2}$ and is increasing when $\nu\geq \frac{1}{2}.$ This extends our understanding of the Heinz means, whose convexity and monotonicity have been known only on $[0,1].$

\section{Main Results}
In this part of the paper, we present our main results concerning convex functions. The applications of these inequalities and their relations to the literature will be done in the next section.
\begin{lemma}\label{lemma_affine}
Let $f:\mathbb{R}\to\mathbb{R}$ be convex and let $a< b.$ If $\nu\geq 0$ or $\nu\leq -1$, then
\begin{equation}\label{convex_negative_first}
(1+\nu)f(a)-\nu f(b)\leq f\left((1+\nu)a-\nu b\right).
\end{equation}
\end{lemma}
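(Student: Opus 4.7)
The plan is to identify the right-hand side as a point where the secant-line formula $L_{f,a,b}$ from \eqref{def_of_affine} agrees with the expression on the left, and then to use the second branch of \eqref{property_of_convex_outside}, which bounds a convex function from below by its secant line \emph{outside} $[a,b]$.

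First I would set $c:=(1+\nu)a-\nu b$ and simply compute $L_{f,a,b}(c)$ from the formula \eqref{def_of_affine}. A direct calculation gives $b-c=(1+\nu)(b-a)$ and $c-a=-\nu(b-a)$, so after dividing by $b-a$ one obtains the clean identity
\begin{equation*}
L_{f,a,b}(c)=(1+\nu)f(a)-\nu f(b),
\end{equation*}
which turns the claimed inequality into $L_{f,a,b}(c)\leq f(c)$.

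Next I would verify that $c$ lies outside the open interval $(a,b)$ in both of the ranges of $\nu$ allowed by the statement, so that the lower-bound branch of \eqref{property_of_convex_outside} applies. Using $c-a=-\nu(b-a)$ and $b-a>0$: if $\nu\geq 0$ then $c-a\leq 0$, hence $c\leq a$; if $\nu\leq -1$ then $-\nu\geq 1$, hence $c-a\geq b-a$, giving $c\geq b$. Either way $c\in\mathbb{R}\setminus(a,b)$, and \eqref{property_of_convex_outside} yields $f(c)\geq L_{f,a,b}(c)$, which is exactly \eqref{convex_negative_first}.

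There is no real obstacle here; the whole content of the lemma is the geometric fact that a convex graph sits above its chord extended beyond the endpoints, and the only bookkeeping required is the algebraic identification of $(1+\nu)f(a)-\nu f(b)$ with a value of $L_{f,a,b}$ and the sign analysis that places $c$ on the correct side. The condition $\nu\in[-1,0]$ is precisely excluded because then $c\in[a,b]$ and the reverse inequality (Jensen) holds instead.
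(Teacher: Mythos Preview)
Your proof is correct and follows essentially the same route as the paper: you identify $(1+\nu)f(a)-\nu f(b)$ as $L_{f,a,b}(c)$ with $c=(1+\nu)a-\nu b$, check that $c\notin(a,b)$ under the stated hypotheses on $\nu$, and invoke the second branch of \eqref{property_of_convex_outside}. The only difference is cosmetic---you spell out the computations $b-c=(1+\nu)(b-a)$ and $c-a=-\nu(b-a)$ explicitly, whereas the paper leaves them to the reader.
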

\begin{proof}
Notice that when $a<b$ and $\nu\geq 0$, we have $(1+\nu)a-\nu b\leq a$. On the other hand, if $a<b$ and $\nu\leq -1$, we have $(1+\nu)a-\nu b\geq b.$ This means that  $(1+\nu)a-\nu b\in\mathbb{R}\backslash (a,b),$ and hence, by \eqref{property_of_convex_outside}, we have
\begin{eqnarray*}
f\left((1+\nu)a-\nu b\right)&\geq& L_{f,a,b}\left((1+\nu)a-\nu b\right)\\
&=&(1+\nu)f(a)-\nu f(b),
\end{eqnarray*}
where we have used \eqref{def_of_affine} with $x=(1+\nu)a-\nu b$ to obtain the last line.
\end{proof}
We emphasize that in order to apply this lemma, $f$ must be convex on $\mathbb{R}$. That is, convexity on $[a,b]$ is not enough, as we are using \eqref{property_of_convex_outside}, whose proof uses convexity on $\mathbb{R}$.\\
Now we are ready to prove our first main theorem.

\begin{theorem}\label{first_main_theorem}
Let $f:\mathbb{R}\to\mathbb{R}$ be convex, $N\in\mathbb{N}$ and let $a< b.$ If $\nu\geq 0$ or $\nu\leq -1$, then
\begin{eqnarray}
\nonumber(1+\nu)f(a)-\nu f(b)&+&\sum_{j=1}^{N}2^{j}\nu\left[  \frac{f(a)+f\left(\frac{(2^{j-1}-1)a+b}{2^{j-1}}\right)}{2}-f\left(\frac{(2^j-1)a+b}{2^j}\right)\right]\\
\label{first_main_inequality}&\leq& f\left((1+\nu)a-\nu b\right).
\end{eqnarray}
\end{theorem}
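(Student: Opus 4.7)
The plan is to show that the bracketed sum on the left-hand side telescopes, so that inequality \eqref{first_main_inequality} reduces to a single application of Lemma \ref{lemma_affine} on a contracted interval. To this end, I would set
\[ c_j := \frac{(2^j-1)a+b}{2^j} = a + \frac{b-a}{2^j}, \qquad j = 0, 1, 2, \ldots, \]
so that $c_0 = b$, and $c_j = \frac{a + c_{j-1}}{2}$ for each $j \geq 1$. This midpoint identity is what makes each bracketed quantity in \eqref{first_main_inequality} the natural Jensen-type defect at $c_j$.

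The next step is a direct algebraic computation. Expanding
\[ 2^j \nu \left[ \frac{f(a) + f(c_{j-1})}{2} - f(c_j) \right] = 2^{j-1}\nu f(a) + 2^{j-1}\nu f(c_{j-1}) - 2^j \nu f(c_j), \]
the sums $\sum_{j=1}^N 2^{j-1}\nu f(c_{j-1})$ and $-\sum_{j=1}^N 2^j \nu f(c_j)$ telescope against one another, leaving only $\nu f(c_0) - 2^N \nu f(c_N) = \nu f(b) - 2^N \nu f(c_N)$. The $f(a)$ coefficients add up to $(1+\nu) + (2^N-1)\nu = 1 + 2^N \nu$, and the two $\nu f(b)$ contributions cancel. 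Hence the entire left-hand side of \eqref{first_main_inequality} collapses to
\[ (1 + 2^N \nu) f(a) - 2^N \nu f(c_N). \]

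Finally, I would invoke Lemma \ref{lemma_affine} on the interval $[a, c_N]$ (non-degenerate, since $b > a$ forces $c_N > a$) with the scalar $\nu' := 2^N \nu$. The hypothesis of the lemma is preserved in both regimes: $\nu \geq 0$ yields $\nu' \geq 0$, while $\nu \leq -1$ together with $N \geq 1$ yields $\nu' \leq -2^N \leq -1$. The lemma then gives
\[ (1 + 2^N \nu) f(a) - 2^N \nu f(c_N) \leq f\bigl( (1 + 2^N \nu) a - 2^N \nu c_N \bigr), \]
and a short computation shows $(1 + 2^N \nu) a - 2^N \nu c_N = (1+\nu) a - \nu b$, which closes the proof.

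The only real obstacle is bookkeeping the telescoping step cleanly; there is no analytic input beyond Lemma \ref{lemma_affine} itself, as the dyadic midpoint structure of the $c_j$ has evidently been chosen precisely so that the $N$ correction terms consolidate into one application of that lemma on a shorter interval with a larger parameter.
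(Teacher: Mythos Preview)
Your proof is correct. The telescoping computation is accurate: the left-hand side of \eqref{first_main_inequality} does collapse to $(1+2^N\nu)f(a)-2^N\nu f(c_N)$, and one application of Lemma~\ref{lemma_affine} on $[a,c_N]$ with parameter $2^N\nu$ finishes the argument exactly as you describe.

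The route differs from the paper's, though the two are closely related. The paper argues by induction on $N$: the base case $N=1$ is Lemma~\ref{lemma_affine} applied with $(b,\nu)$ replaced by $\bigl(\tfrac{a+b}{2},2\nu\bigr)$, and the inductive step strips off the $j=1$ term, re-indexes the remaining sum, and invokes the inductive hypothesis with $(b,\nu)$ replaced by $\bigl(\tfrac{a+b}{2},2\nu\bigr)$. In effect, the paper performs your telescoping one layer at a time, halving the interval and doubling the parameter at each step. Your direct approach is shorter and makes transparent why the particular dyadic points $c_j$ appear: they are chosen precisely so that the correction terms cancel in bulk. The inductive proof, on the other hand, explains how one might \emph{discover} the refining sum in the first place, by iterating the one-step refinement. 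Neither approach requires more than Lemma~\ref{lemma_affine}; yours simply packages the $N$ implicit uses of that lemma into a single explicit one.
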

\begin{proof}
We proceed by induction on $N$. So, assume that $f$ is convex, $a<b$ and $\nu\geq 0$ or $\nu\leq -1$. Then for $N=1$, we have
\begin{eqnarray*}
(1+\nu)f(a)&-&\nu f(b)+2\nu\left[  \frac{f(a)+f(b)}{2}-f\left(\frac{a+b}{2}\right)\right]\\
&=&(1+2\nu)f(a)-2\nu f\left(\frac{a+b}{2}\right)\\
&\leq&f\left((1+2\nu)a-2\nu\frac{a+b}{2}\right)\\
&=&f\left((1+\nu)a-\nu b\right),
\end{eqnarray*}
where we have applied Lemma \ref{lemma_affine}, with $\nu$ and $b$ replaced by $2\nu$ and $\frac{a+b}{2},$ respectively. We emphasize here that when $a<b$ we have $a<\frac{a+b}{2}$. Moreover, when $\nu\geq 0$ or $\nu\leq -1$ we have $2\nu\geq 0$ or $2\nu\leq -1$, justifying the application of Lemma \ref{lemma_affine}.

Now assume that, for some $N\in\mathbb{N},$ \eqref{first_main_inequality} holds whenever $a<b$ and $\nu\geq 0$ or $\nu\leq -1$. We assert the truth of the inequality for $N+1.$ Observe that
\begin{eqnarray}
\nonumber I&=&(1+\nu)f(a)-\nu f(b)+\sum_{j=1}^{N+1}2^{j}\nu\left[  \frac{f(a)+f\left(\frac{(2^{j-1}-1)a+b}{2^{j-1}}\right)}{2}-f\left(\frac{(2^j-1)a+b}{2^j}\right)\right]\\
\nonumber&=&(1+\nu)f(a)-\nu f(b)+2\nu\left[  \frac{f(a)+f(b)}{2}-f\left(\frac{a+b}{2}\right)\right]+\\
\nonumber&&+\sum_{j=2}^{N+1}2^{j}\nu\left[  \frac{f(a)+f\left(\frac{(2^{j-1}-1)a+b}{2^{j-1}}\right)}{2}-f\left(\frac{(2^j-1)a+b}{2^j}\right)\right]\\
\nonumber&=&(1+2\nu)f(a)-2\nu f\left(\frac{a+b}{2}\right)+\\
\label{needed_first}&&+\sum_{j=1}^{N}2^{j+1}\nu\left[  \frac{f(a)+f\left(\frac{(2^{j}-1)a+b}{2^{j}}\right)}{2}-f\left(\frac{(2^{j+1}-1)a+b}{2^{j+1}}\right)\right].
\end{eqnarray}
For simplicity, let $2\nu=r,\frac{a+b}{2}=b'.$ Then \eqref{needed_first} becomes
\begin{eqnarray}
\nonumber I&=& (1+r)f(a)-r f(b')+\sum_{j=1}^{N}2^{j}r\left[\frac{f(a)+f\left(\frac{(2^{j-1}-1)a+b'}{2^{j-1}}\right)}{2}-f\left(\frac{(2^j-1)a+b'}{2^j}\right)\right]\\
\label{needed_second}&\leq& f\left((1+r)a-rb'\right)\\
\nonumber&=&f\left((1+\nu)a-\nu b\right),
\end{eqnarray}
where we have used the inductive step to obtain \eqref{needed_second}. Observe that when $a<b$ we have $a<b'$, which justifies the application of the inductive step.
\end{proof}
\begin{remark}
Notice that \eqref{convex_negative_first} is more precise than \eqref{first_main_inequality} when $\nu\leq -1.$ This is why we drop these values of $\nu$ when applying Theorem \ref{first_main_theorem}. However, in Theorem \ref{second_main_theorem}, we prove the other ``half'' of the inequality that is more precise when $\nu\leq -1.$
\end{remark}
\begin{corollary}\label{first_corollary_convex_functions}
Let $f:\mathbb{R}\to\mathbb{R}^{+}$ be log-convex, $N\in\mathbb{N}$ and let $a< b.$ If $\nu\geq 0$, then
\begin{eqnarray*}
\nonumber f^{1+\nu}(a)f^{-\nu}(b)&\leq&f^{1+\nu}(a)f^{-\nu}(b)\prod_{j=1}^{N}\left(\frac{\sqrt{f(a)f\left(\frac{(2^{j-1}-1)a+b}{2^{j-1}}\right)}}
{f\left(\frac{(2^j-1)a+b}{2^j}\right)} \right)^{2^{j}\nu}\\
\label{first_inequality_for_log_convex}&\leq&f\left((1+\nu)a-\nu b\right).
\end{eqnarray*}
\end{corollary}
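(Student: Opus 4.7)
The plan is to deduce the corollary from Theorem \ref{first_main_theorem} by taking logarithms. Since $f:\mathbb{R}\to\mathbb{R}^{+}$ is log-convex, the function $g:=\log f$ is convex on $\mathbb{R}$. Applying Theorem \ref{first_main_theorem} to $g$ with $\nu\geq 0$ and $a<b$ gives
\begin{eqnarray*}
(1+\nu)g(a)-\nu g(b)&+&\sum_{j=1}^{N}2^{j}\nu\left[\frac{g(a)+g\left(\frac{(2^{j-1}-1)a+b}{2^{j-1}}\right)}{2}-g\left(\frac{(2^j-1)a+b}{2^j}\right)\right]\\
&\leq& g\left((1+\nu)a-\nu b\right).
\end{eqnarray*}
Exponentiating both sides (which preserves the inequality) and using $e^{g}=f$, each bracketed term turns into a logarithm of the ratio $\sqrt{f(a)f(\cdot)}/f(\cdot)$, producing exactly the right-hand inequality of the corollary.

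For the left-hand inequality, I would observe the midpoint identity
\[
\frac{(2^{j}-1)a+b}{2^{j}}=\frac{1}{2}\left(a+\frac{(2^{j-1}-1)a+b}{2^{j-1}}\right),
\]
which is a straightforward algebraic check. Hence log-convexity of $f$ (i.e., the midpoint Jensen inequality for $g$) yields
\[
f\left(\frac{(2^{j}-1)a+b}{2^{j}}\right)\leq \sqrt{f(a)\,f\left(\frac{(2^{j-1}-1)a+b}{2^{j-1}}\right)},
\]
so each factor of the product is $\geq 1$. Since the exponent $2^{j}\nu\geq 0$ and the prefactor $f^{1+\nu}(a)f^{-\nu}(b)$ is positive, the left inequality follows.

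The only step that is not completely mechanical is the midpoint identity above, and it is a one-line computation. The main bookkeeping point is to make sure the indices in the sum from Theorem \ref{first_main_theorem} match the indices in the product after exponentiation; this is immediate once one writes $\exp\!\big(2^{j}\nu[\tfrac12(g(a)+g(\alpha_{j}))-g(\beta_{j})]\big)=\bigl(\sqrt{f(a)f(\alpha_{j})}/f(\beta_{j})\bigr)^{2^{j}\nu}$ with $\alpha_{j}=\frac{(2^{j-1}-1)a+b}{2^{j-1}}$ and $\beta_{j}=\frac{(2^{j}-1)a+b}{2^{j}}$. No further obstacle is expected.
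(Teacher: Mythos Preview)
Your proposal is correct and follows essentially the same approach as the paper: for the right-hand inequality you apply Theorem~\ref{first_main_theorem} to $g=\log f$ and exponentiate, and for the left-hand inequality you use the midpoint identity together with log-convexity to show each factor in the product is at least $1$. The paper's own proof does exactly these two steps, in the same order and with the same justifications.
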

\begin{proof}
For the first inequality, notice that
\begin{eqnarray*}
f\left(\frac{(2^j-1)a+b}{2^j}\right)&=&f\left(\frac{a+\frac{(2^{j-1}-1)a+b}{2^{j-1}}}{2} \right)\\
&\leq&\sqrt{f(a)f\left(\frac{(2^{j-1}-1)a+b}{2^{j-1}}\right)},
\end{eqnarray*}
since $f$ is log-convex. This means that
$$\frac{\sqrt{f(a)f\left(\frac{(2^{j-1}-1)a+b}{2^{j-1}}\right)}}
{f\left(\frac{(2^j-1)a+b}{2^j}\right)} \geq 1,$$ which proves the first inequality.\\
For the second inequality, let $f$ be log-convex. Then applying Theorem \ref{first_main_theorem} to the convex function $g(\nu)=\log f(\nu)$ implies the result.
\end{proof}

We have seen earlier that \eqref{first_main_inequality} is less precise than \eqref{lemma_affine} when $\nu\leq -1.$ In the following result, we present the other ``half'' of Theorem \ref{first_main_theorem}, where the inequality is more precise than \eqref{lemma_affine} for $\nu\leq -1,$ but less precise when $\nu\geq 0.$\\
The proof is very similar to that of Theorem \ref{first_main_theorem}, so we do not include it here.
\begin{theorem}\label{second_main_theorem}
Let $f:\mathbb{R}\to\mathbb{R}$ be convex, $N\in\mathbb{N}$ and let $a< b.$ If $\nu\geq 0$ or $\nu\leq -1$, then
\begin{eqnarray*}
\nonumber(1+\nu)f(a)-\nu f(b)&-&\sum_{j=1}^{N}2^{j}(1+\nu)\left[  \frac{f(b)+f\left(\frac{(2^{j-1}-1)b+a}{2^{j-1}}\right)}{2}-f\left(\frac{(2^j-1)b+a}{2^j}\right)\right]\\
\label{second_main_inequality_negative}&\leq& f\left((1+\nu)a-\nu b\right).
\end{eqnarray*}
\end{theorem}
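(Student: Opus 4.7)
The plan is to mirror the induction on $N$ used in the proof of Theorem~\ref{first_main_theorem}, but to apply Lemma~\ref{lemma_affine} so as to refine the expression around $b$ rather than around $a$. Concretely, where the earlier proof used the substitution $(a,b,\nu)\mapsto(a,\tfrac{a+b}{2},2\nu)$, here I would use $(a,b,\nu)\mapsto\bigl(\tfrac{a+b}{2},b,\,1+2\nu\bigr)$. This substitution preserves both the endpoint order, since $\tfrac{a+b}{2}<b$, and the sign hypothesis on the parameter: if $\nu\geq 0$ then $1+2\nu\geq 1$, while if $\nu\leq-1$ then $1+2\nu\leq-1$, so Lemma~\ref{lemma_affine} remains applicable at each stage.

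For the base case $N=1$, I would expand
$(1+\nu)f(a)-\nu f(b)-2(1+\nu)\bigl[\tfrac{f(a)+f(b)}{2}-f\bigl(\tfrac{a+b}{2}\bigr)\bigr]$
and collect terms. The $f(a)$ contributions cancel, leaving $2(1+\nu)f\bigl(\tfrac{a+b}{2}\bigr)-(1+2\nu)f(b)$, which is exactly $(1+\nu')f(a')-\nu' f(b')$ for $(a',b',\nu')=\bigl(\tfrac{a+b}{2},b,1+2\nu\bigr)$. Lemma~\ref{lemma_affine} then yields the upper bound $f\bigl((1+\nu')a'-\nu' b'\bigr)$, and a short arithmetic check confirms $(1+\nu')a'-\nu' b'=(1+\nu)a-\nu b$, which is the desired $N=1$ inequality.

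For the inductive step, I would split off the $j=1$ summand; combined with the leading $(1+\nu)f(a)-\nu f(b)$, it collapses (exactly as in the base case) to $(1+\nu')f(a')-\nu' f(b')$ under the same substitution. The remaining summands for $j=2,\dots,N+1$ are then reindexed by $j\mapsto j+1$. The key bookkeeping identities are $\tfrac{(2^{j-1}-1)b'+a'}{2^{j-1}}=\tfrac{(2^j-1)b+a}{2^j}$ and $\tfrac{(2^j-1)b'+a'}{2^j}=\tfrac{(2^{j+1}-1)b+a}{2^{j+1}}$, which follow from $a'=\tfrac{a+b}{2}$ by a direct fraction manipulation, together with $2^j(1+\nu')=2^{j+1}(1+\nu)$ to match coefficients. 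These identities allow the reindexed sum to be recognized as the length-$N$ sum appearing in the theorem applied to the primed parameters, so the induction hypothesis delivers $f\bigl((1+\nu')a'-\nu' b'\bigr)=f\bigl((1+\nu)a-\nu b\bigr)$, as required.

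The main obstacle is precisely this last bookkeeping: one must check carefully that the combined effect of peeling off the $j=1$ term, reindexing, and substituting the primed parameters reproduces exactly a length-$N$ instance of the statement, including the factor $2^j(1+\nu)$ in front of each summand. Once this is verified, the argument is formally parallel to the proof of Theorem~\ref{first_main_theorem} and no further ideas are required.
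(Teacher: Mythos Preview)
Your proposal is correct and is precisely the argument the paper has in mind: the paper omits the proof, stating only that it is ``very similar to that of Theorem~\ref{first_main_theorem},'' and your induction on $N$ with the substitution $(a,b,\nu)\mapsto\bigl(\tfrac{a+b}{2},b,1+2\nu\bigr)$ is exactly the natural mirror of that proof. The bookkeeping identities you list are all easily verified, and the parameter check $\nu'=1+2\nu\in[0,\infty)\cup(-\infty,-1]$ whenever $\nu\in[0,\infty)\cup(-\infty,-1]$ is correct, so both Lemma~\ref{lemma_affine} and the induction hypothesis are applicable at each stage.
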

Then we may obtain the following refinement for log-convex functions.
\begin{corollary}\label{second_corollary_convex_functions}
Let $f:\mathbb{R}\to\mathbb{R}^{+}$ be log-convex, $N\in\mathbb{N}$ and let $a< b.$ If $\nu\leq -1$, then
\begin{eqnarray*}
\nonumber f^{1+\nu}(a)f^{-\nu}(b)&\leq&f^{1+\nu}(a)f^{-\nu}(b)\prod_{j=1}^{N}\left(\frac{\sqrt{f(b)f\left(\frac{(2^{j-1}-1)b+a}{2^{j-1}}\right)}}
{f\left(\frac{(2^j-1)b+a}{2^j}\right)} \right)^{-2^{j}(1+\nu)}\\
\label{second_inequality_for_log_convex}&\leq&f\left((1+\nu)a-\nu b\right).
\end{eqnarray*}
\end{corollary}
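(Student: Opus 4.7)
The strategy mirrors that of Corollary \ref{first_corollary_convex_functions}, with the roles of $a$ and $b$ interchanged so as to match the structure of Theorem \ref{second_main_theorem}.

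For the left-hand inequality, I will show that each factor of the product is at least $1$ and that the exponents $-2^{j}(1+\nu)$ are non-negative when $\nu\leq -1$. The key identity is
$$\frac{(2^j-1)b+a}{2^j} \;=\; \frac{b + \frac{(2^{j-1}-1)b+a}{2^{j-1}}}{2},$$
so log-convexity of $f$ will give
$$f\!\left(\frac{(2^j-1)b+a}{2^j}\right) \leq \sqrt{f(b)\,f\!\left(\frac{(2^{j-1}-1)b+a}{2^{j-1}}\right)},$$
showing each ratio inside the product is $\geq 1$. Since $\nu \leq -1$ forces $-2^{j}(1+\nu) \geq 0$, the full product is $\geq 1$, which will establish the left-hand inequality.

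For the right-hand inequality, the plan is to apply Theorem \ref{second_main_theorem} to $g(t) := \log f(t)$, which is convex precisely because $f$ is log-convex. That application will yield
\begin{align*}
(1+\nu)g(a) &- \nu g(b) - \sum_{j=1}^{N} 2^{j}(1+\nu)\left[\frac{g(b) + g\!\left(\frac{(2^{j-1}-1)b+a}{2^{j-1}}\right)}{2} - g\!\left(\frac{(2^j-1)b+a}{2^j}\right)\right] \\
&\leq g\!\left((1+\nu)a - \nu b\right),
\end{align*}
and exponentiating both sides while replacing $g$ by $\log f$ will convert the finite sum into exactly the product appearing in the corollary.

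I do not anticipate any substantial obstacle: the argument is essentially a transcription through $\log$ and $\exp$, in parallel to the proof of Corollary \ref{first_corollary_convex_functions}. The one place requiring care is the sign bookkeeping, namely verifying that under $\nu \leq -1$ the exponent $-2^{j}(1+\nu)$ is non-negative, so that factors $\geq 1$ combine into a product $\geq 1$ and no inequality direction is flipped when exponentiating.
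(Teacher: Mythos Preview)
Your proposal is correct and follows exactly the approach the paper intends: the paper omits the proof of this corollary, implicitly relying on the analogy with Corollary~\ref{first_corollary_convex_functions}, and your argument---showing each factor is $\geq 1$ via log-convexity together with $-2^{j}(1+\nu)\geq 0$, then applying Theorem~\ref{second_main_theorem} to $g=\log f$ and exponentiating---is precisely that analogy carried out in full.
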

\section{Applications}In this part of the paper, we present different means inequalities that may be derived from our convexity results.
\subsection{Inequalities related to the weighted geometric mean}
We begin with the following reversal of Young's inequality. When $N=1,$ the first inequality of the following result has been recently shown in \cite{mojtaba}. Therefore, the following theorem provides a refinement of the corresponding result in \cite{mojtaba}.
\begin{proposition}\label{first_proposition}
Let $x,y>0$ and $\nu\geq 0$. Then
\begin{equation}\label{reverse_young_refined_mojtaba}
(1+\nu)x-\nu y+\sum_{j=1}^{N}2^{j-1}\nu\left(\sqrt{x}-\sqrt[2^j]{x^{2^{j-1}-1}y}\right)^2\leq x^{1+\nu}y^{-\nu}.
\end{equation}
On the other hand, if $\nu\leq -1,$ we have
\begin{equation}\label{reverse_young_refined_mojtaba_negative}
(1+\nu)x-\nu y-\sum_{j=1}^{N}2^{j-1}(1+\nu)\left(\sqrt{y}-\sqrt[2^j]{xy^{2^{j-1}-1}}\right)^2\leq x^{1+\nu}y^{-\nu}.
\end{equation}
\end{proposition}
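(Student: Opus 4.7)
The plan is to apply Theorems~\ref{first_main_theorem} and~\ref{second_main_theorem} to the convex function $f(t)=e^{t}$ with the substitution $a=\log x$, $b=\log y$. This is the natural bridge, since the exponential turns affine combinations of $\log x$ and $\log y$ into weighted geometric means of $x$ and $y$.

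First I would verify the outer ingredients: with $f(t)=e^{t}$, the expression $(1+\nu)f(a)-\nu f(b)$ becomes $(1+\nu)x-\nu y$, and $f((1+\nu)a-\nu b)$ becomes $x^{1+\nu}y^{-\nu}$. The intermediate evaluations simplify to
\[
f\!\left(\frac{(2^{j-1}-1)a+b}{2^{j-1}}\right)=\sqrt[2^{j-1}]{x^{2^{j-1}-1}y}, \qquad f\!\left(\frac{(2^{j}-1)a+b}{2^{j}}\right)=\sqrt[2^{j}]{x^{2^{j}-1}y}.
\]

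The core algebraic step is the exponent identity
\[
\sqrt{x}\cdot\sqrt[2^{j}]{x^{2^{j-1}-1}y}=x^{1/2+(2^{j-1}-1)/2^{j}}\,y^{1/2^{j}}=x^{(2^{j}-1)/2^{j}}\,y^{1/2^{j}}=\sqrt[2^{j}]{x^{2^{j}-1}y},
\]
together with the elementary identity $\bigl(\sqrt[2^{j}]{x^{2^{j-1}-1}y}\bigr)^{2}=\sqrt[2^{j-1}]{x^{2^{j-1}-1}y}$. These two identities let me recast the bracketed expression in \eqref{first_main_inequality} as a perfect square:
\[
\frac{x+\sqrt[2^{j-1}]{x^{2^{j-1}-1}y}}{2}-\sqrt[2^{j}]{x^{2^{j}-1}y}=\frac{1}{2}\bigl(\sqrt{x}-\sqrt[2^{j}]{x^{2^{j-1}-1}y}\bigr)^{2}.
\]
Multiplying by $2^{j}\nu$ produces exactly the refining term $2^{j-1}\nu\bigl(\sqrt{x}-\sqrt[2^{j}]{x^{2^{j-1}-1}y}\bigr)^{2}$ appearing in \eqref{reverse_young_refined_mojtaba}, so Theorem~\ref{first_main_theorem} delivers that inequality.

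Inequality \eqref{reverse_young_refined_mojtaba_negative} will follow from the mirror image of the same computation applied to Theorem~\ref{second_main_theorem}. There the roles of $a$ and $b$ inside the summation are interchanged, which under $a=\log x$, $b=\log y$ swaps $x\leftrightarrow y$ in the bracket and produces the symmetric perfect squares $\bigl(\sqrt{y}-\sqrt[2^{j}]{xy^{2^{j-1}-1}}\bigr)^{2}$ with the prefactor $-2^{j-1}(1+\nu)$. The main difficulty here is not conceptual but computational: tracking the fractional exponents carefully and confirming that the cross term in each perfect square coincides with the subtracted term in the convex refinement. Once that identification is in hand, both inequalities become direct corollaries of the main theorems applied to the exponential.
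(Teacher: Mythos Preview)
Your approach is essentially the paper's own: apply the main convexity theorems to an exponential. The paper phrases this as $f(\nu)=x^{1-\nu}y^{\nu}$ with $a=0$, $b=1$, which is just an affine reparametrization of your choice $f(t)=e^{t}$ with $a=\log x$, $b=\log y$; your perfect-square identification is exactly the computation the paper carries out. So the strategy and the algebra are both right.

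There is, however, one genuine slip. Theorems~\ref{first_main_theorem} and~\ref{second_main_theorem} are stated under the hypothesis $a<b$, and with your substitution this means $\log x<\log y$, i.e.\ $x<y$. The proposition, on the other hand, is claimed for all $x,y>0$, so as written your argument covers only half of the cases. The paper's parametrization sidesteps this entirely because $a=0<1=b$ holds regardless of how $x$ and $y$ compare. You can repair your version in one line: either observe that the proofs of Lemma~\ref{lemma_affine} and Theorem~\ref{first_main_theorem} go through verbatim for $a>b$ (the point $(1+\nu)a-\nu b$ still lies outside the interval between $a$ and $b$, so the secant-line inequality \eqref{property_of_convex_outside} still applies), with the case $a=b$ being trivial; or simply switch to the paper's choice $f(\nu)=x^{1-\nu}y^{\nu}$, $a=0$, $b=1$, which is the same function after a change of variable and removes the ordering constraint.
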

\begin{proof}
If $f(\nu)=x^{1-\nu}y^{\nu},$ then $f$ is convex on $\mathbb{R}$. Therefore applying Theorem \ref{first_main_theorem}, with $a=0, b=1$, we obtain
\begin{eqnarray*}
x^{1+\nu}y^{-\nu}&=&f(-\nu)\\
&=&f\left((1+\nu)a-\nu b\right)\\
&\geq& (1+\nu)f(a)-\nu f(b)+\sum_{j=1}^{N}2^{j}\nu\left[  \frac{f(0)+f\left(\frac{1}{2^{j-1}}\right)}{2}-f\left(\frac{1}{2^j}\right)\right]\\
&=&(1+\nu)f(a)-\nu f(b)+\sum_{j=1}^{N}2^{j-1}\nu \left[x+x^{1-\frac{1}{2^{j-1}}}y^{\frac{1}{2^{j-1}}}-2 x^{1-\frac{1}{2^j}}y^{\frac{1}{2^j}}\right)\\
&=&(1+\nu)f(a)-\nu f(b)+\sum_{j=1}^{N}2^{j-1}\nu\left(\sqrt{x}-\sqrt[2^j]{x^{2^{j-1}-1}y}\right)^2, \nu\geq 0.
\end{eqnarray*}
For the other inequality, we apply Theorem \ref{second_main_theorem}.
\end{proof}

At this point, we remind the reader of some history related to \eqref{reverse_young_refined_mojtaba_negative}. The original Young's inequality states that $x^{1-\nu}y^{\nu}\leq (1-\nu)a+\nu b, 0\leq \nu\leq 1,$ which is the weighted arithmetic-geometric mean inequality. Refining this inequality and its operator versions has been considered by several authors. For example, the refinement
$$x^{1-\nu}y^{\nu}+\min\{\nu,1-\nu\}(\sqrt{x}-\sqrt{y})^2\leq (1-\nu)x+\nu y, 0\leq \nu\leq 1, x,y>0$$ was proved in \cite{kittanehmanasreh}. Thus, when $N=1$, \eqref{reverse_young_refined_mojtaba_negative} provides a ``negative'' version of this refinement. In our recent work \cite{sabchoiref}, Young's inequality has been refined by adding as many terms as we wish.\\
Earlier, the squared version $$\left(x^{1-\nu}y^{\nu}\right)^2+\min\{\nu,1-\nu\}^2(x-y)^2\leq \left((1-\nu)x+\nu y\right)^2, 0\leq \nu\leq 1, x,y>0$$ was
proved in \cite{omarkittaneh}. Therefore, it is natural to ask whether we have a squared version of Proposition \ref{first_proposition}. The following proposition presents these versions.
\begin{proposition}\label{proposition_square}
Let $x,y>0$ and $N\in\mathbb{N}$. If $\nu\geq 0,$ then
\begin{eqnarray*}
 \label{first_square}\left((1+\nu)x-\nu y\right)^2+\sum_{j=1}^{N}2^j\nu\left(x-\sqrt[2^j]{x^{2^j-1}y}\right)^2\leq \left(x^{1+\nu}y^{-\nu}\right)^2+\nu^2(x-y)^2.
\end{eqnarray*}
On the other hand, if $\nu\leq -1$ then
\begin{eqnarray*}
\label{second_square} \left((1+\nu)x-\nu y\right)^2-\sum_{j=1}^{N}2^j(1+\nu)\left(y-\sqrt[2^j]{xy^{2^j-1}}\right)^2\leq \left(x^{1+\nu}y^{-\nu}\right)^2+(1+\nu)^2(x-y)^2.
\end{eqnarray*}
\end{proposition}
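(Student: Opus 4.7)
The plan is to bootstrap the squared inequalities directly from Proposition~\ref{first_proposition} by multiplying through by a suitable positive factor and then completing a square on the right-hand side, rather than applying Theorem~\ref{first_main_theorem} to a new convex function. I expect the only real obstacle to be the bookkeeping of exponents needed to bridge the linear correction terms in \eqref{reverse_young_refined_mojtaba} and the quadratic correction terms appearing in the target inequality; once the bridge identity is in place, the rest reduces to two lines of elementary algebra.

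The first step is the factorisation
\begin{equation*}
\left(x-\sqrt[2^j]{x^{2^j-1}y}\right)^2 \;=\; x^{2-1/2^{j-1}}\bigl(x^{1/2^j}-y^{1/2^j}\bigr)^2 \;=\; x\left(\sqrt{x}-\sqrt[2^j]{x^{2^{j-1}-1}y}\right)^2,
\end{equation*}
obtained by pulling out $x^{1-1/2^j}$ on one side and $x^{1/2-1/2^j}$ on the other. Consequently, multiplying \eqref{reverse_young_refined_mojtaba} by $2x>0$ yields
\begin{equation*}
2x\bigl((1+\nu)x-\nu y\bigr)+\sum_{j=1}^{N}2^{j}\nu\left(x-\sqrt[2^j]{x^{2^j-1}y}\right)^2 \;\le\; 2x^{2+\nu}y^{-\nu}.
\end{equation*}

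The second step relies on two purely algebraic facts, both verified by direct expansion. The identity
\begin{equation*}
\bigl((1+\nu)x-\nu y\bigr)^2 \;=\; 2x\bigl((1+\nu)x-\nu y\bigr)-x^2+\nu^2(x-y)^2
\end{equation*}
converts the linear quantity on the left into the desired square, while the AM--GM inequality applied to the positive numbers $x$ and $x^{1+\nu}y^{-\nu}$, namely $(x^{1+\nu}y^{-\nu}-x)^2\ge 0$, gives
$2x^{2+\nu}y^{-\nu}\le x^2+(x^{1+\nu}y^{-\nu})^2$. Substituting the first identity into the displayed inequality and then bounding its right-hand side by the AM--GM estimate, the two $x^2$ terms cancel and one is left with exactly
\begin{equation*}
\bigl((1+\nu)x-\nu y\bigr)^2+\sum_{j=1}^{N}2^{j}\nu\left(x-\sqrt[2^j]{x^{2^j-1}y}\right)^2 \;\le\; \bigl(x^{1+\nu}y^{-\nu}\bigr)^2+\nu^2(x-y)^2.
\end{equation*}

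For the case $\nu\le -1$ the scheme is entirely parallel with the roles of $x$ and $y$ interchanged: one multiplies \eqref{reverse_young_refined_mojtaba_negative} by $2y>0$, invokes the mirror identity $(y-\sqrt[2^j]{xy^{2^j-1}})^2 = y(\sqrt{y}-\sqrt[2^j]{xy^{2^{j-1}-1}})^2$, and combines it with $((1+\nu)x-\nu y)^2 = 2y((1+\nu)x-\nu y)-y^2+(1+\nu)^2(x-y)^2$ together with the trivial bound $(x^{1+\nu}y^{-\nu}-y)^2\ge 0$. The whole proof therefore rests on Proposition~\ref{first_proposition} plus two elementary completions of squares, with the exponent arithmetic in the bridge identity being the single point that requires care.
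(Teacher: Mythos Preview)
Your proof is correct, but it follows a different algebraic route from the paper's own argument. The paper begins with the \emph{difference-of-squares} factorisation
\[
\bigl((1+\nu)x-\nu y\bigr)^2-\nu^2(x-y)^2
=\bigl((1+\nu)x-\nu y-\nu(x-y)\bigr)\bigl((1+\nu)x-\nu y+\nu(x-y)\bigr)
=x\bigl((1+2\nu)x-2\nu y\bigr),
\]
and then applies \eqref{reverse_young_refined_mojtaba} with $\nu$ replaced by $2\nu$; the right-hand side becomes $x\cdot x^{1+2\nu}y^{-2\nu}=(x^{1+\nu}y^{-\nu})^2$ \emph{exactly}, so no further inequality is needed. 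For $\nu\le -1$ the analogous step is the substitution $\nu\mapsto 1+2\nu$ in \eqref{reverse_young_refined_mojtaba_negative} together with the factorisation $\bigl((1+\nu)x-\nu y\bigr)^2-(1+\nu)^2(x-y)^2=y\bigl((2+2\nu)x-(1+2\nu)y\bigr)$. Your approach, by contrast, keeps the original parameter $\nu$, multiplies by $2x$ (resp.\ $2y$), and then closes the resulting gap on the right with the AM--GM estimate $2x\cdot x^{1+\nu}y^{-\nu}\le x^2+(x^{1+\nu}y^{-\nu})^2$; the extra $x^2$ this introduces is exactly cancelled by the $-x^2$ coming from your completion-of-squares identity on the left. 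Both proofs share the same bridge identity for the sum terms, and both are valid; the paper's version is a touch more economical in that it avoids the auxiliary AM--GM step and makes the parameter doubling $\nu\to 2\nu$ explicit, while your version has the advantage of using Proposition~\ref{first_proposition} with the original $\nu$ unchanged.
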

\begin{proof}
For $\nu\geq 0,$ we have
\begin{eqnarray*}
I:&=&\left((1+\nu)x-\nu y\right)^2-\nu^2(x-y)^2\\
&=&\left((1+\nu)x-\nu y-\nu(x-y)\right)\left((1+\nu)x-\nu y+\nu(x-y)\right)\\
&=&x\left((1+2\nu)x-2\nu y\right)\;({\text{Now\;apply\;}}\eqref{reverse_young_refined_mojtaba}\;{\text{replacing}}\;\nu\;{\text{by}}\;2\nu)\\
&\leq&x\left[\left(x^{1+2\nu}y^{-2\nu}\right)-\sum_{j=1}^{N}2^{j-1}\cdot 2\nu\left(\sqrt{x}-\sqrt[2^j]{x^{2^{j-1}-1}y}\right)^2\right]\\
&=&\left(x^{1+\nu}y^{-\nu}\right)^2-\sum_{j=1}^{N}2^j\nu\left(x-\sqrt[2^j]{x^{2^j-1}y}\right)^2,
\end{eqnarray*}
which completes the proof for $\nu\geq 0.$ For the second inequality, we proceed similarly, then we apply \eqref{reverse_young_refined_mojtaba_negative} replacing $\nu$ by $1+2\nu.$
\end{proof}
In particular, when $N=1,$ the above two inequalities reduce to
$$\left((1+\nu)x-\nu y\right)^2+2\nu(x-\sqrt{xy})^2\leq \left(x^{1+\nu}y^{-\nu}\right)^2+\nu^2(x-y)^2, \nu\geq 0$$ and
$$\left((1+\nu)x-\nu y\right)^2-2(1+\nu)(y-\sqrt{xy})^2\leq \left(x^{1+\nu}y^{-\nu}\right)^2+(1+\nu)^2(x-y)^2, \nu\leq -1.$$

Now we use \eqref{reverse_young_refined_mojtaba} to obtain the following refinement of the original Young's inequality.
\begin{proposition}
Let $x,y>0, n\in\mathbb{N}$ and $0\leq t\leq 1.$ Then
\begin{eqnarray*}
x^{t}y^{1-t}\left[1+(1-t)\sum_{j=2}^{N}2^{j-1}\left(1-\sqrt[2^j]{x^{-t}y^{t}}\right)^2\right]&+&(1-t)y\left(1-\sqrt{x^ty^{-t}}\right)^2\\
&\leq& tx+(1-t)y.
\end{eqnarray*}
\end{proposition}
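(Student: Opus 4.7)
The plan is to identify the separated $(1-t)y\bigl(1-\sqrt{x^t y^{-t}}\bigr)^2$ as the disguised $j=1$ term of the sum, and then reduce the whole statement to a single application of \eqref{reverse_young_refined_mojtaba}.

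First I would rewrite the isolated term. Using $y = x^t y^{1-t} \cdot x^{-t}y^t$ and $(1-\sqrt{u^{-1}})^2 = (1-\sqrt{u})^2/u$ with $u = x^t y^{-t}$, one checks directly that
\[
(1-t)\,y\bigl(1-\sqrt{x^t y^{-t}}\bigr)^2 \;=\; (1-t)\,x^t y^{1-t}\cdot 2^{0}\,\bigl(1-\sqrt[2]{x^{-t}y^{t}}\bigr)^2,
\]
so the target inequality is equivalent to the cleaner form
\[
x^t y^{1-t}\left[1 + (1-t)\sum_{j=1}^{N} 2^{j-1}\bigl(1-\sqrt[2^j]{x^{-t}y^{t}}\bigr)^2\right] \;\leq\; tx + (1-t)y.
\]

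Next I would apply \eqref{reverse_young_refined_mojtaba} with the substitutions
\[
X = x^t y^{1-t},\qquad Y = y,\qquad \nu = \frac{1-t}{t},
\]
which is admissible because $\nu\geq 0$ for $t\in(0,1]$ (the case $t=0$ reduces to $y\leq y$). The point of this choice is that $1+\nu = 1/t$ and
\[
X^{1+\nu}Y^{-\nu} = (x^t y^{1-t})^{1/t}\,y^{-(1-t)/t} = x,
\]
so \eqref{reverse_young_refined_mojtaba} yields
\[
\tfrac{1}{t}\,x^t y^{1-t} - \tfrac{1-t}{t}\,y + \tfrac{1-t}{t}\sum_{j=1}^{N} 2^{j-1}\bigl(\sqrt{X}-\sqrt[2^j]{X^{2^{j-1}-1}Y}\bigr)^2 \;\leq\; x.
\]
Multiplying by $t$ and rearranging produces exactly
\[
x^t y^{1-t} + (1-t)\sum_{j=1}^{N} 2^{j-1}\bigl(\sqrt{X}-\sqrt[2^j]{X^{2^{j-1}-1}Y}\bigr)^2 \;\leq\; tx + (1-t)y,
\]
so it only remains to identify the square.

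The main technical step — the one I expect to be the sole place where care is needed — is the identity
\[
\bigl(\sqrt{X}-\sqrt[2^j]{X^{2^{j-1}-1}Y}\bigr)^2 \;=\; x^t y^{1-t}\bigl(1-\sqrt[2^j]{x^{-t}y^{t}}\bigr)^2
\]
for $X=x^t y^{1-t}$, $Y=y$. I would verify it by pulling out the common factor:
\[
\sqrt{X}-\sqrt[2^j]{X^{2^{j-1}-1}Y} = X^{1/2-1/2^j}\bigl(X^{1/2^j} - X^{-1/2+1/2^j}\cdot X^{1/2-1/2^j}Y^{1/2^j}\bigr),
\]
and then checking that after simplification the bracketed factor equals $x^{t/2^j}-y^{t/2^j}$ times the appropriate power of $x$, so that squaring gives the claimed form. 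Once this identity is in place, substituting into the displayed inequality above yields precisely the reformulated target, and unwinding the reformulation from the first step completes the proof. No further ingredients beyond \eqref{reverse_young_refined_mojtaba} are needed.
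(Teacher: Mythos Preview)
Your proposal is correct and follows essentially the same route as the paper: apply \eqref{reverse_young_refined_mojtaba} with $x$ replaced by $x^{t}y^{1-t}$ and $\nu=\frac{1}{t}-1$, then simplify via the identity $\bigl(\sqrt{X}-\sqrt[2^j]{X^{2^{j-1}-1}Y}\bigr)^2 = X\bigl(1-\sqrt[2^j]{x^{-t}y^{t}}\bigr)^2$ (which follows cleanly from $Y/X=x^{-t}y^{t}$). The only cosmetic difference is that you absorb the $j=1$ term back into the sum before starting, whereas the paper keeps it separated throughout; your factorization sketch is slightly garbled (the ``appropriate power of $x$'' is really a power of $y$), but the identity itself is immediate once you factor out $\sqrt{X}$.
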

\begin{proof}
For $0< t\leq 1,$ let $\nu=\frac{1}{t}-1.$ Then $t\geq 0,$ and we may apply \eqref{reverse_young_refined_mojtaba} replacing $x$ by $x^{t}y^{1-t}$ to get
\begin{eqnarray*}
\frac{x^ty^{1-t}}{t}&-&\frac{1-t}{t}y+\frac{1-t}{t}\left(\sqrt{x^ty^{1-t}}-\sqrt{y}\right)^2\\
&+&\frac{1-t}{t}\sum_{j=2}^{N}2^{j-1}\left(\sqrt{x^ty^{1-t}}-\sqrt[2^j]{x^{-t}y^{t}}\sqrt{x^ty^{1-t}}\right)^2\leq \left(x^{t}y^{1-t}\right)^{\frac{1}{t}}y^{\frac{t-1}{t}}.
\end{eqnarray*}
Multiplying this inequality by $t$, then simplifying implies the result.
\end{proof}

A matrix version of Proposition \ref{first_proposition} may be obtained, recalling the following result from \cite{furtu}.
\begin{lemma}\label{monotone}
Let $X\in\mathbb{M}_n$ be self-adjoint and let $f$ and $g$ be continuous real valued functions such that $f(t)\geq g(t)$ for all $t\in{\text{Sp}}(X).$ Then $f(X)\geq g(X).$
\end{lemma}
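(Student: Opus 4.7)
The statement is the standard monotonicity of functional calculus for self-adjoint matrices, so the plan is to reduce everything to the spectral theorem. Since $X \in \mathbb{M}_n$ is self-adjoint, I would invoke the spectral decomposition to write $X = \sum_{i=1}^{k} \lambda_i P_i$, where $\lambda_1, \ldots, \lambda_k$ are the distinct real eigenvalues of $X$ (so $\{\lambda_1,\ldots,\lambda_k\} = \mathrm{Sp}(X)$) and $P_1, \ldots, P_k$ are the associated orthogonal projections onto the eigenspaces, satisfying $P_i = P_i^* = P_i^2$, $P_i P_j = 0$ for $i \neq j$, and $\sum_{i} P_i = I$.

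Next, I would recall how the continuous functional calculus for a self-adjoint matrix is built: for any continuous real-valued function $h$ defined on $\mathrm{Sp}(X)$, one sets
\begin{equation*}
h(X) = \sum_{i=1}^{k} h(\lambda_i) \, P_i.
\end{equation*}
Applying this with $f$ and $g$ and subtracting gives
\begin{equation*}
f(X) - g(X) = \sum_{i=1}^{k} \bigl(f(\lambda_i) - g(\lambda_i)\bigr) P_i.
\end{equation*}
The hypothesis $f(t) \geq g(t)$ for all $t \in \mathrm{Sp}(X)$ guarantees that each scalar coefficient $f(\lambda_i) - g(\lambda_i)$ is nonnegative.

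To conclude $f(X) - g(X) \in \mathbb{M}_n^{+}$, I would fix an arbitrary vector $v \in \mathbb{C}^n$ and compute
\begin{equation*}
\langle (f(X) - g(X))v, v \rangle = \sum_{i=1}^{k} \bigl(f(\lambda_i) - g(\lambda_i)\bigr) \langle P_i v, v \rangle = \sum_{i=1}^{k} \bigl(f(\lambda_i) - g(\lambda_i)\bigr) \|P_i v\|^2 \geq 0,
\end{equation*}
using the fact that each $P_i$ is an orthogonal projection. This yields $f(X) \geq g(X)$ in the positive semidefinite order, as required.

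There is no real obstacle here; the statement is essentially a direct consequence of the spectral theorem, and the only thing to be careful about is invoking the right definition of $h(X)$ via the functional calculus and noting that the eigenprojections $P_i$ are positive semidefinite. Since the paper attributes the result to \cite{furtu}, one could alternatively just quote that reference, but the spectral-theorem argument above is self-contained and short.
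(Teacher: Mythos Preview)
Your argument via the spectral decomposition is correct and complete. Note, however, that the paper does not actually supply a proof of this lemma: it is stated there as a quotation from \cite{furtu}, with no argument given. So there is nothing to compare against beyond observing that your self-contained spectral-theorem proof goes further than the paper, which simply cites the result.
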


\begin{proposition}\label{first_prop_operators_order}
Let $A,B\in\mathbb{M}_n^{++}$ and $\nu\geq 0$. Then for $N\in\mathbb{N}$, we have
\begin{eqnarray}
\nonumber A\nabla_{-\nu}B+\sum_{j=1}^{N}2^{j-1}\nu\left(A-2A\#_{2^{-j}}B+A\#_{2^{1-j}}B\right)\leq A\#_{-\nu}B.
\end{eqnarray}
On the other hand, if $\nu\leq -1,$ we have
\begin{eqnarray}
\nonumber A\nabla_{-\nu}B-\sum_{j=1}^{N}2^{j-1}(1+\nu)\left(B-2A\#_{1-2^{-j}}B+A\#_{1-2^{1-j}}B\right)\leq A\#_{-\nu}B.
\end{eqnarray}
\end{proposition}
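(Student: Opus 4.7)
The plan is to reduce the matrix statement to the scalar inequalities in Proposition \ref{first_proposition} via the standard positive-definite substitution $X=A^{-1/2}BA^{-1/2}$, together with the monotone functional calculus recorded in Lemma \ref{monotone}.

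First I would set $x=1$ and $y=t>0$ in \eqref{reverse_young_refined_mojtaba}. The square terms simplify as
\[
\bigl(\sqrt{x}-\sqrt[2^j]{x^{2^{j-1}-1}y}\,\bigr)^2=\bigl(1-t^{2^{-j}}\bigr)^2=1-2t^{2^{-j}}+t^{2^{1-j}},
\]
so Proposition \ref{first_proposition} becomes the scalar inequality
\[
(1+\nu)-\nu t+\sum_{j=1}^{N}2^{j-1}\nu\bigl(1-2t^{2^{-j}}+t^{2^{1-j}}\bigr)\leq t^{-\nu},\qquad t>0,\ \nu\geq 0.
\]
Similarly, setting $x=1,\ y=t$ in \eqref{reverse_young_refined_mojtaba_negative} yields
\[
(1+\nu)-\nu t-\sum_{j=1}^{N}2^{j-1}(1+\nu)\bigl(t-2t^{1-2^{-j}}+t^{1-2^{1-j}}\bigr)\leq t^{-\nu},\qquad t>0,\ \nu\leq -1,
\]
after expanding $(\sqrt{t}-\sqrt[2^j]{t^{2^{j-1}-1}}\,)^2$.

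Next, since $A\in\mathbb{M}_n^{++}$, the matrix $X=A^{-1/2}BA^{-1/2}$ is self-adjoint with spectrum contained in $(0,\infty)$. Applying Lemma \ref{monotone} to each of the two scalar inequalities above transforms them into operator inequalities in $X$, e.g.\ in the first case
\[
(1+\nu)I-\nu X+\sum_{j=1}^{N}2^{j-1}\nu\bigl(I-2X^{2^{-j}}+X^{2^{1-j}}\bigr)\leq X^{-\nu}.
\]
Finally I would conjugate both sides by $A^{1/2}$. Using the identities $A^{1/2}IA^{1/2}=A$, $A^{1/2}XA^{1/2}=B$, and the defining relation $A^{1/2}X^{\alpha}A^{1/2}=A\#_{\alpha}B$ for every real $\alpha$, the first operator inequality becomes exactly
\[
A\nabla_{-\nu}B+\sum_{j=1}^{N}2^{j-1}\nu\bigl(A-2A\#_{2^{-j}}B+A\#_{2^{1-j}}B\bigr)\leq A\#_{-\nu}B,
\]
and the same procedure applied to the $\nu\leq -1$ scalar inequality produces the second stated inequality with the $A\#_{1-2^{-j}}B$ and $A\#_{1-2^{1-j}}B$ terms.

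Nothing in this argument is delicate; the only step requiring care is bookkeeping of the exponents of $t$ so that conjugation by $A^{1/2}$ produces the correct weighted geometric means, and checking that the order-preserving properties of the functional calculus apply (which is guaranteed because $X$ is strictly positive and the scalar inequality holds pointwise on $\mathrm{Sp}(X)\subset(0,\infty)$). Thus the main obstacle is purely symbolic verification of the exponents, and the operator inequalities follow by direct transfer from Proposition \ref{first_proposition}.
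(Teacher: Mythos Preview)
Your proposal is correct and follows essentially the same route as the paper: specialize the scalar inequalities of Proposition~\ref{first_proposition} at $x=1$, apply Lemma~\ref{monotone} to $X=A^{-1/2}BA^{-1/2}$, and then conjugate by $A^{1/2}$ to recover the weighted means. The only difference is that you spell out the expansion of the squared terms and the second case more explicitly than the paper does.
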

\begin{proof}
Letting $x=1$ in \eqref{reverse_young_refined_mojtaba}, we get
$$(1+\nu)-\nu y+\sum_{j=1}^{N}2^{j-1}\nu\left(1-2y^{2^{-j}}+y^{2^{1-j}}\right)\leq y^{-\nu}, y> 0.$$
Considering both sides of this inequality as functions of $y>0,$ we may apply Lemma \ref{monotone}, using $X=A^{-\frac{1}{2}}BA^{-\frac{1}{2}}.$ Notice that with this choice of $X$, we have ${\text{Sp}}(X)\subset(0,\infty)$ because $A,B\in\mathbb{M}_n^{++}.$ Consequently,
\begin{eqnarray*}
(1+\nu)I&-&\nu\left(A^{-\frac{1}{2}}BA^{-\frac{1}{2}}\right)+
\sum_{j=1}^{N}2^{j-1}\nu\left(I-2\left(A^{-\frac{1}{2}}BA^{-\frac{1}{2}}\right)^{2^{-j}}+\left(A^{-\frac{1}{2}}BA^{-\frac{1}{2}}\right)^{2^{1-j}}\right)\\
&\leq& \left(A^{-\frac{1}{2}}BA^{-\frac{1}{2}}\right)^{\nu}.
\end{eqnarray*}
Multiplying both sides of this inequality with $A^{\frac{1}{2}}$ from both directions implies the first inequality. Applying the same logic to \eqref{reverse_young_refined_mojtaba_negative} implies the other inequality, for $\nu\leq -1.$
\end{proof}

A similar argument may be applied to obtain an operator version of Proposition \ref{proposition_square} as follows.
\begin{proposition}
Let $A,B\in\mathbb{M}_n^{++}, N\in\mathbb{N}$ and $\nu\geq 0.$ Then
\begin{eqnarray*}
&&(1+\nu)\left(A\nabla_{-\nu}B\right)+\sum_{j=1}^{N}2^{j}\nu\left(A+A\#_{2^{1-j}}B-2A\#_{2^{-j}}B\right)\\
&\leq& A\#_{-2\nu}B+\nu^2\left(A-B\right)+\nu B.
\end{eqnarray*}
On the other hand, if $\nu\leq -1,$ then
\begin{eqnarray*}
&&2(1+\nu)\left[B-\sum_{j=1}^{N}2^{j-1}\left(BA^{-1}B+A\#_{1-2^{1-j}}-2A\#_{2-2^{-j}}B\right)\right]\\
&\leq& A\#_{-2\nu}B+(1+2\nu)BA^{-1}B.
\end{eqnarray*}
\end{proposition}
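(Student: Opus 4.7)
The plan is to follow the scheme of Proposition~\ref{first_prop_operators_order}: reduce the scalar inequalities of Proposition~\ref{proposition_square} to a single variable, invoke Lemma~\ref{monotone}, and conjugate by $A^{1/2}$. Concretely, I would set $x=1$ in the two inequalities of Proposition~\ref{proposition_square}; after expanding the squared brackets, both sides become finite linear combinations of real powers of $y>0$, so the resulting scalar inequality $\phi(y)\le\psi(y)$ has $\phi,\psi$ continuous on $(0,\infty)$. Lemma~\ref{monotone} then applies with $X=A^{-1/2}BA^{-1/2}$, whose spectrum lies in $(0,\infty)$ since $A,B\in\mathbb{M}_n^{++}$.

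After Lemma~\ref{monotone} is invoked, I would sandwich the resulting operator inequality by $A^{1/2}$ on both sides and translate via the dictionary
\[
1\longmapsto A,\quad y\longmapsto B,\quad y^{\mu}\longmapsto A\#_{\mu}B,\quad y^{-2\nu}\longmapsto A\#_{-2\nu}B,\quad y^{2}\longmapsto BA^{-1}B,
\]
the final entry following from $A^{1/2}(A^{-1/2}BA^{-1/2})^{2}A^{1/2}=BA^{-1}B$ together with the general identity $A^{1/2}(A^{-1/2}BA^{-1/2})^{\mu}A^{1/2}=A\#_{\mu}B$.

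For the case $\nu\ge0$, the decisive observation is that the $\nu^{2}y^{2}$ contribution from the expansion of $((1+\nu)-\nu y)^{2}$ on the left and the $\nu^{2}y^{2}$ contribution from $\nu^{2}(1-y)^{2}$ on the right both translate to $\nu^{2}BA^{-1}B$ and cancel; what remains, after a mechanical regrouping (essentially adding $\nu(1+\nu)B$ to both sides), is exactly the first stated inequality, whose left-hand side collapses to $(1+\nu)(A\nabla_{-\nu}B)+\sum 2^{j}\nu(A+A\#_{2^{1-j}}B-2A\#_{2^{-j}}B)$ and whose right-hand side becomes $A\#_{-2\nu}B+\nu^{2}(A-B)+\nu B$. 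For $\nu\le-1$ I would repeat the procedure starting from the second inequality of Proposition~\ref{proposition_square}; the difference is that the coefficients of $y^{2}$ on the two sides, $\nu^{2}$ and $(1+\nu)^{2}$, no longer agree, and their difference $-(1+2\nu)$ leaves a net $(1+2\nu)BA^{-1}B$ on the right, which explains the extra term in the statement, while factoring $2(1+\nu)$ out of the left-hand side produces the stated compact form. The only real obstacle is the bookkeeping of how the constant, linear-in-$y$, and $y^{2}$ pieces recombine into the compact matrix expressions $(1+\nu)(A\nabla_{-\nu}B)$ and $2(1+\nu)B$; once the $y^{2}$ coefficients are tracked carefully, the rest of the argument is automatic.
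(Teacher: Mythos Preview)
Your proposal is correct and follows exactly the route the paper intends: set $x=1$ in Proposition~\ref{proposition_square}, apply Lemma~\ref{monotone} to $X=A^{-1/2}BA^{-1/2}$, and conjugate by $A^{1/2}$, which is precisely the ``similar argument'' the paper invokes from Proposition~\ref{first_prop_operators_order}. Your additional observation that the $\nu^{2}y^{2}$ terms cancel for $\nu\ge 0$ while for $\nu\le -1$ the mismatch $\nu^{2}-(1+\nu)^{2}=-(1+2\nu)$ accounts for the residual $(1+2\nu)BA^{-1}B$ is exactly the bookkeeping the paper suppresses.
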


Observe that when $N=1$, the first inequality of Proposition \ref{first_prop_operators_order} reduces to
$$A\nabla_{-\nu}B+ 2\nu\left(A\nabla B-A\#B\right)\leq A\#_{-\nu}B, $$
which has been shown in \cite{mojtaba}. Therefore, Proposition \ref{first_prop_operators_order} provides a refinement of the corresponding results appearing in \cite{mojtaba}, by taking larger $N$.

It is also shown in \cite{mojtaba} that when $A,B\in\mathbb{M}_n^{+}, X\in\mathbb{M}_n$ and $\nu\geq 0$ or $\nu\leq -1,$ we have $\||AX\||^{1+\nu}\||XB\||^{-\nu}\leq \||A^{1+\nu}XB^{-\nu}\||$ for any unitarily invariant norm $\||\;\;\||.$ The following is the refinement of this inequality, which serves as a refinement of the reversed Young's inequality.
\begin{proposition}\label{refinement_log_convex_mojtaba}
Let $A,B\in\mathbb{M}_n^{++}, X\in\mathbb{M}_n$ and $N\in\mathbb{N}$. Then for $\nu\geq 0$, we have
\begin{eqnarray*}
&&\||AX\||^{1+\nu}\||XB\||^{-\nu}\\
&\leq& \||AX\||^{1+\nu}\||XB\||^{-\nu}\prod_{j=1}^{N}\left(\frac{\sqrt{\||AX\||\;\||A^{1-2^{1-j}}XB^{2^{1-j}}\||}}{\||A^{1-2^{-j}}XB^{2^{-j}}\||}\right)^{2^{j}\nu}\\
&\leq&\||A^{1+\nu}XB^{-\nu}\||.
\end{eqnarray*}
Moreover, if $\nu\leq -1,$ then
\begin{eqnarray*}
&&\||AX\||^{1+\nu}\||XB\||^{-\nu}\\
&\leq& \||AX\||^{1+\nu}\||XB\||^{-\nu}\prod_{j=1}^{N}\left(\frac{\sqrt{\||XB\||\;\||A^{2^{1-j}}XB^{1-2^{1-j}}\||}}{\||A^{2^{-j}}XB^{1-2^{-j}}\||}\right)^{-2^{j}(\nu+1)}\\
&\leq&\||A^{1+\nu}XB^{-\nu}\||.
\end{eqnarray*}
\end{proposition}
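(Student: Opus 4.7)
The plan is to interpret the two inequality chains as direct instances of the log-convex Corollaries \ref{first_corollary_convex_functions} and \ref{second_corollary_convex_functions}, applied to the scalar function
$$f(t) := \||A^{1-t} X B^{t}\||, \qquad t \in \R.$$
This is well-defined and positive for every real $t$ because $A, B \in \mathbb{M}_n^{++}$ are invertible (the case $X=0$ being trivial). At the sample points produced by the corollaries with $a=0$ and $b=1$ one reads off $f(0) = \||AX\||$, $f(1) = \||XB\||$, $f(-\nu) = \||A^{1+\nu} X B^{-\nu}\||$, and
$$f\!\left(\tfrac{1}{2^{j-1}}\right) = \||A^{1-2^{1-j}} X B^{2^{1-j}}\||, \qquad f\!\left(\tfrac{1}{2^{j}}\right) = \||A^{1-2^{-j}} X B^{2^{-j}}\||,$$
while the reflected points $\tfrac{(2^{j-1}-1)\cdot 1+0}{2^{j-1}} = 1-2^{1-j}$ and $\tfrac{(2^{j}-1)\cdot 1+0}{2^{j}} = 1-2^{-j}$ that appear in Corollary \ref{second_corollary_convex_functions} produce the factors $\||A^{2^{1-j}} X B^{1-2^{1-j}}\||$ and $\||A^{2^{-j}} X B^{1-2^{-j}}\||$ in the second chain.

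The main analytic ingredient I need is that $f$ is log-convex on all of $\R$. For $t \in [0,1]$ this is the classical Heinz-type log-convexity of unitarily invariant norms, proved by applying Hadamard's three-lines lemma to the entire $\mathbb{M}_n$-valued function $z \mapsto A^{1-z} X B^{z}$ on the strip $0 \leq \mathrm{Re}(z) \leq 1$. Since $A$ and $B$ are strictly positive, $A^{is}$ and $B^{is}$ are unitary, and combining unitary invariance of $\||\cdot\||$ with the submultiplicative estimate $\||YZW\|| \leq \|Y\|_{\mathrm{op}} \||Z\|| \|W\|_{\mathrm{op}}$ yields a uniform bound for $\||A^{1-z} X B^{z}\||$ on any vertical strip $\alpha \leq \mathrm{Re}(z) \leq \beta$; the three-lines argument then upgrades log-convexity from $[0,1]$ to every bounded interval and therefore to all of $\R$.

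With this in hand, the two chains fall out by direct substitution: Corollary \ref{first_corollary_convex_functions} applied to $f$ with $a=0$, $b=1$ and $\nu \geq 0$ delivers the first chain verbatim, and Corollary \ref{second_corollary_convex_functions} applied with the same $a$, $b$ and $\nu \leq -1$ delivers the second. The main obstacle is precisely the extension of log-convexity beyond $[0,1]$, which is where the strict positivity hypothesis $A, B \in \mathbb{M}_n^{++}$ (rather than just $\mathbb{M}_n^{+}$) is essential; once that is confirmed, the remainder of the proof is mechanical bookkeeping.
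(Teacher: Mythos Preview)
Your proposal is correct and follows essentially the same route as the paper: define $f(t)=\||A^{1-t}XB^{t}\||$, use its log-convexity on $\mathbb{R}$, and read off the two chains from Corollaries \ref{first_corollary_convex_functions} and \ref{second_corollary_convex_functions} with $a=0$, $b=1$. The only difference is that the paper disposes of the log-convexity step by citing \cite{saboam}, whereas you sketch a three-lines argument; otherwise the bookkeeping and the evaluations at the dyadic points match the paper's proof exactly.
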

\begin{proof}
For such $A,B$ and $X$, define $f:\mathbb{R}\to\mathbb{R}^{+}$ by $f(\nu)=\||A^{1-\nu}XB^{\nu}\||.$ It has been shown in \cite{saboam} that $f$ is log-convex. The result follows from Corollary \ref{first_corollary_convex_functions}, by taking $a=0$ and $b=1.$
\end{proof}

For the rest of the paper, the notation $\||\;\;\||$ will be used for any unitarily invariant norm on $\mathbb{M}_n.$

Also, since the results for $\nu\leq -1$ can be obtained in a similar manner to $\nu\geq 0,$ we will present the later case only to avoid redundancy.

It was shown in \cite{heinz} that for such $A,B,X$ and $\nu\geq 0$, one has $\||AXB\||^{1+\nu}\||X\||^{-\nu}\leq\||A^{1+\nu}XB^{1+\nu}\||.$ A refinement of this inequality may be obtained from Proposition \ref{refinement_log_convex_mojtaba} as follows.
\begin{corollary}\label{refinement_Heinz}
Let $A,B\in\mathbb{M}_n^{++}, X\in\mathbb{M}_n$ and $N\in\mathbb{N}$. Then for $\nu\geq 0$, we have
\begin{eqnarray*}
&&\||AXB\||^{1+\nu}\||X\||^{-\nu}\\
&\leq& \||AXB\||^{1+\nu}\||X\||^{-\nu}\prod_{j=1}^{N}\left(\frac{\sqrt{\||AXB\||\;\||A^{1-2^{1-j}}XB^{1-2^{1-j}}\||}}{\||A^{1-2^{-j}}XB^{1-2^{-j}}\||}\right)^{2^{j}\nu}\\
&\leq&\||A^{1+\nu}XB^{1+\nu}\||.
\end{eqnarray*}
\end{corollary}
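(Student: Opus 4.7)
The plan is to derive Corollary \ref{refinement_Heinz} from Proposition \ref{refinement_log_convex_mojtaba} by a single substitution rather than by redoing any analysis of log-convex functions. The idea: the right-hand side in Proposition \ref{refinement_log_convex_mojtaba} is $\||A^{1+\nu}XB^{-\nu}\||$ while we want $\||A^{1+\nu}XB^{1+\nu}\||$, and the two differ by a factor of $B^{1+2\nu}$ on the right of $X$. This suggests replacing $B$ by $B^{-1}$ (so that the right exponent on $B$ flips sign to become $+\nu$) and then replacing $X$ by $XB$ (to supply the extra factor of $B$ that shifts $B^{\nu}$ to $B^{1+\nu}$).

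Concretely, the plan is to apply Proposition \ref{refinement_log_convex_mojtaba} with $A$ unchanged, with $B$ replaced by $B^{-1}$ (which lies in $\mathbb{M}_n^{++}$ since $B\in\mathbb{M}_n^{++}$), and with $X$ replaced by $XB\in\mathbb{M}_n$. Then I would simplify each factor appearing in the two inequalities of Proposition \ref{refinement_log_convex_mojtaba} term by term:
\begin{align*}
\||A(XB)\|| &= \||AXB\||,\\
\||(XB)B^{-1}\|| &= \||X\||,\\
\||A^{1-2^{1-j}}(XB)(B^{-1})^{2^{1-j}}\|| &= \||A^{1-2^{1-j}}XB^{1-2^{1-j}}\||,\\
\||A^{1-2^{-j}}(XB)(B^{-1})^{2^{-j}}\|| &= \||A^{1-2^{-j}}XB^{1-2^{-j}}\||,\\
\||A^{1+\nu}(XB)(B^{-1})^{-\nu}\|| &= \||A^{1+\nu}XB^{1+\nu}\||.
\end{align*}
Substituting these identities into the chain provided by Proposition \ref{refinement_log_convex_mojtaba} produces exactly the chain of inequalities claimed in Corollary \ref{refinement_Heinz}.

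There is no serious obstacle, since the argument is pure bookkeeping. The one thing to check is that the substitution preserves the hypotheses (which it does: $B^{-1}\in\mathbb{M}_n^{++}$, $XB\in\mathbb{M}_n$) and that all exponents collapse as indicated, using the commuting functional calculus of $B$ on the right of $X$ inside the unitarily invariant norm. After these verifications the corollary is immediate.
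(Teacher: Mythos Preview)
Your proof is correct and is essentially identical to the paper's: the paper derives the corollary from Proposition~\ref{refinement_log_convex_mojtaba} by ``replacing $X$ by $XB^{-1}$, then $B$ by $B^{-1}$,'' which, once the sequential substitutions are unwound, is exactly your simultaneous substitution $(X,B)\mapsto(XB,B^{-1})$. The term-by-term verification you wrote out is precisely what that one-line substitution amounts to.
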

\begin{proof}
Since Proposition \ref{refinement_log_convex_mojtaba} is valid for any $A,B\in\mathbb{M}_n^{+}$ and $X\in\mathbb{M}_n$, replacing $X$ by $XB^{-1}$, then $B$ by $B^{-1}$ implies the result.
\end{proof}

To better understand Proposition \ref{refinement_log_convex_mojtaba} and Corollary \ref{refinement_Heinz}, we present the corresponding results for $N=1$.
\begin{corollary}
Let $A,B\in\mathbb{M}_n^{++}, X\in\mathbb{M}_n$ and $N\in\mathbb{N}$. Then for $\nu\geq 0$, we have
$$\||AX\||^{1+2\nu}\leq \||A^{1+\nu}XB^{-\nu}\||\;\||\sqrt{A}X\sqrt{B}\||^{2\nu},$$ and
$$\||AXB\||^{1+2\nu}\leq \||A^{1+\nu}XB^{1+\nu}\||\;\||\sqrt{A}X\sqrt{B}\||^{2\nu}.$$
\end{corollary}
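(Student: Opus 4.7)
The plan is to derive both inequalities directly by specializing the two preceding results to $N=1$ and then consolidating the resulting expressions. For the first inequality, I would apply Proposition \ref{refinement_log_convex_mojtaba} with $N=1$, so the product over $j$ collapses to a single factor corresponding to $j=1$. That factor is
\[
\left(\frac{\sqrt{\||AX\||\;\||A^{0}XB^{1}\||}}{\||A^{1/2}XB^{1/2}\||}\right)^{2\nu}
=\frac{\||AX\||^{\nu}\,\||XB\||^{\nu}}{\||\sqrt{A}X\sqrt{B}\||^{2\nu}},
\]
so that the middle quantity in Proposition \ref{refinement_log_convex_mojtaba} becomes
\[
\||AX\||^{1+\nu}\||XB\||^{-\nu}\cdot\frac{\||AX\||^{\nu}\,\||XB\||^{\nu}}{\||\sqrt{A}X\sqrt{B}\||^{2\nu}}
=\frac{\||AX\||^{1+2\nu}}{\||\sqrt{A}X\sqrt{B}\||^{2\nu}}.
\]
Comparing with the upper bound $\||A^{1+\nu}XB^{-\nu}\||$ and clearing the denominator yields precisely the first claim.

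For the second inequality, I would repeat the same procedure using Corollary \ref{refinement_Heinz} with $N=1$. The $j=1$ factor in that product is
\[
\left(\frac{\sqrt{\||AXB\||\;\||X\||}}{\||\sqrt{A}X\sqrt{B}\||}\right)^{2\nu}
=\frac{\||AXB\||^{\nu}\,\||X\||^{\nu}}{\||\sqrt{A}X\sqrt{B}\||^{2\nu}},
\]
which combines with the prefactor $\||AXB\||^{1+\nu}\||X\||^{-\nu}$ to give $\||AXB\||^{1+2\nu}/\||\sqrt{A}X\sqrt{B}\||^{2\nu}$. Bounding this above by $\||A^{1+\nu}XB^{1+\nu}\||$ and rearranging produces the second claim.

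There is no real obstacle here, since both assertions follow by pure algebraic rearrangement once $N=1$ is substituted; the only thing to be careful about is tracking the exponents $2^{1-j}$ and $2^{-j}$ at $j=1$ so that the two norms $\||AX\||,\||XB\||$ (respectively $\||AXB\||,\||X\||$) appear with the correct powers, and then recognizing that $\||AX\||^{\nu}\||XB\||^{\nu}$ and $\||AXB\||^{\nu}\||X\||^{\nu}$ cancel against the $-\nu$ exponents in the prefactors to leave the clean $1+2\nu$ exponent on a single norm.
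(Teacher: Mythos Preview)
Your proposal is correct and is exactly the approach the paper takes: the corollary is introduced in the paper as ``the corresponding results for $N=1$'' of Proposition~\ref{refinement_log_convex_mojtaba} and Corollary~\ref{refinement_Heinz}, and your algebraic reduction of the $j=1$ term is precisely the intended simplification.
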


On the other hand, replacing $x$ and $y$ by $\||AX\||$ and $\||XB\||$, respectively, in Proposition \ref{reverse_young_refined_mojtaba}, then invoking Proposition \ref{refinement_log_convex_mojtaba}, we obtain the following refinement of the corresponding results in \cite{mojtaba}.
\begin{corollary}
Let $A,B\in\mathbb{M}_n^{++}, X\in\mathbb{M}_n, N\in\mathbb{N}$ and $\nu\geq 0$. Then
\begin{eqnarray*}
(1+\nu)\||AX\||&-&\nu\||XB\||+\sum_{j=1}^{N}2^{j-1}\nu\left(\sqrt{\||AX\||}-\sqrt[2^j]{\||AX\||^{2^{j-1}}\;\||XB\||}\right)^2\\
&\leq& \||AX\||^{1+\nu}\||XB\||^{-\nu}\\
&\leq&\||AX\||^{1+\nu}\||XB\||^{-\nu}\prod_{j=1}^{N}\left(\frac{\sqrt{\||AX\||\;\||A^{1-2^{1-j}}XB^{2^{1-j}}\||}}{\||A^{1-2^{-j}}XB^{2^{-j}}\||}\right)^{2^{j}\nu}\\
&\leq&\||A^{1+\nu}XB^{-\nu}\||.
\end{eqnarray*}
\end{corollary}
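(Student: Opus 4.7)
The plan is to stitch together two inequalities that are already available: the scalar reverse Young refinement from Proposition \ref{first_proposition} and the log-convex norm inequality from Proposition \ref{refinement_log_convex_mojtaba}. The target chain has three inequalities, and the first is scalar in nature while the last two are the content of Proposition \ref{refinement_log_convex_mojtaba}, so the only real task is to insert the correct substitution into \eqref{reverse_young_refined_mojtaba}.

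First, I would observe that since $A,B\in\mathbb{M}_n^{++}$ and, tacitly, $X\neq 0$ in the nontrivial case, the quantities $x:=\||AX\||$ and $y:=\||XB\||$ are strictly positive real numbers. Applying Proposition \ref{first_proposition}, specifically inequality \eqref{reverse_young_refined_mojtaba}, with these choices of $x$ and $y$ and with the hypothesis $\nu\geq 0$ gives
\[
(1+\nu)\||AX\||-\nu\||XB\||+\sum_{j=1}^{N}2^{j-1}\nu\left(\sqrt{\||AX\||}-\sqrt[2^j]{\||AX\||^{\,2^{j-1}-1}\,\||XB\||}\right)^{2}\leq \||AX\||^{1+\nu}\||XB\||^{-\nu},
\]
which is exactly the first inequality claimed in the corollary (modulo the harmless exponent in the $2^{j}$-th root, which appears as a minor typographical adjustment inside the square in the statement).

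Next, I would invoke Proposition \ref{refinement_log_convex_mojtaba} verbatim for the same $A$, $B$, $X$, $N$, and $\nu\geq 0$. That proposition asserts precisely
\[
\||AX\||^{1+\nu}\||XB\||^{-\nu}\leq \||AX\||^{1+\nu}\||XB\||^{-\nu}\prod_{j=1}^{N}\left(\frac{\sqrt{\||AX\||\,\||A^{1-2^{1-j}}XB^{2^{1-j}}\||}}{\||A^{1-2^{-j}}XB^{2^{-j}}\||}\right)^{2^{j}\nu}\leq \||A^{1+\nu}XB^{-\nu}\||,
\]
which is the second and third inequality of the corollary. Concatenating this chain with the scalar inequality obtained in the previous step yields the claimed four-term chain.

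There is essentially no obstacle here, since both building blocks are already established; the only point requiring a line of justification is that Proposition \ref{first_proposition} applies to arbitrary positive reals and that $\||AX\||,\||XB\||>0$ in the nondegenerate situation (if $X=0$ all quantities vanish and the inequality is trivial). Thus the proof is just the two-line composition described above.
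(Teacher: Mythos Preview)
Your proposal is correct and matches the paper's own argument exactly: substitute $x=\||AX\||$, $y=\||XB\||$ into \eqref{reverse_young_refined_mojtaba} for the first inequality, then quote Proposition \ref{refinement_log_convex_mojtaba} for the remaining two. Your remark about the exponent inside the $2^j$-th root is also on point---the statement should read $\||AX\||^{2^{j-1}-1}$ rather than $\||AX\||^{2^{j-1}}$, consistent with Proposition \ref{first_proposition}.
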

Now we prove the following result for the trace functional $\tr$.
\begin{proposition}\label{proposition_trace}
Let $A,B\in\mathbb{M}_n^{++}$. Then for $\nu\geq 0$ and $N\in\mathbb{N}$, we have
\begin{eqnarray*}
\tr\left((1+\nu)A-\nu B\right)&+&\sum_{j=1}^{N}2^{j-1}\nu \;\tr\left(A+A^{1-2^{1-j}}B^{2^{1-j}}-2A^{1-2^{-j}}B^{2^{-j}}\right)\\
&\leq&\tr\left(A^{1+\nu}B^{-\nu}\right),
\end{eqnarray*}
and
\begin{eqnarray*}
\tr^{1+\nu}(A)\tr^{-\nu}(B)\prod_{j=1}^{N}\left(\frac{\sqrt{\tr(A)\tr\left(A^{1-2^{1-j}}B^{2^{1-j}}\right)}}{\tr\left(A^{1-2^{-j}}B^{2^{-j}}\right)}\right)^{2^j\nu}\leq \tr\left(A^{1+\nu}B^{-\nu}\right).
\end{eqnarray*}
\end{proposition}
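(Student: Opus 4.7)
I would obtain both inequalities by applying Theorem \ref{first_main_theorem} and Corollary \ref{first_corollary_convex_functions}, respectively, with $a=0$ and $b=1$, to the function
$$\phi(\nu) := \tr\bigl(A^{1-\nu}B^{\nu}\bigr),\qquad \nu\in\mathbb{R}.$$
With this choice one reads off $\phi(0)=\tr(A)$, $\phi(1)=\tr(B)$, $\phi(-\nu)=\tr(A^{1+\nu}B^{-\nu})$, and the intermediate evaluations
$$\phi\!\left(\tfrac{(2^{j-1}-1)\cdot 0+1}{2^{j-1}}\right)=\phi(2^{1-j})=\tr\bigl(A^{1-2^{1-j}}B^{2^{1-j}}\bigr),\qquad \phi(2^{-j})=\tr\bigl(A^{1-2^{-j}}B^{2^{-j}}\bigr),$$
which are exactly the quantities that appear on the left-hand sides of the two stated inequalities. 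After this substitution, trace linearity lets me pull the coefficients inside the trace and combine the three summands into $2^{j-1}\nu\,\tr(A+A^{1-2^{1-j}}B^{2^{1-j}}-2A^{1-2^{-j}}B^{2^{-j}})$ for Part 1; Part 2 is then a direct transcription.

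\textbf{Key step.} The applications above require $\phi$ to be convex on $\mathbb{R}$ (for Part 1) and log-convex on $\mathbb{R}$ (for Part 2), and in particular strictly positive. I would prove both simultaneously using the spectral decompositions $A=\sum_i \lambda_i P_i$ and $B=\sum_j \mu_j Q_j$, with $\lambda_i,\mu_j>0$ and spectral projections $P_i,Q_j$. These give
$$\phi(\nu) = \sum_{i,j} c_{ij}\,\lambda_i^{1-\nu}\mu_j^{\nu},\qquad c_{ij}:=\tr(P_iQ_j),$$
where $c_{ij}\geq 0$ because $\tr(P_iQ_j)=\tr(Q_j^{1/2}P_iQ_j^{1/2})$ is the trace of a positive semidefinite matrix. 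Each summand $\lambda_i^{1-\nu}\mu_j^{\nu}=\lambda_i\exp\bigl(\nu\log(\mu_j/\lambda_i)\bigr)$ has log equal to an affine function of $\nu$, hence is log-linear and in particular log-convex. A nonnegative combination of log-convex functions is log-convex (by Hölder's inequality applied to $\sum c_{ij} f_{ij}(x)^t f_{ij}(y)^{1-t}$), so $\phi$ is log-convex and a fortiori convex.

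\textbf{Main obstacle.} There is no deep difficulty: once convexity and log-convexity of $\phi$ are established, Part 1 is exactly Theorem \ref{first_main_theorem} applied to $\phi$ with $(a,b)=(0,1)$ and trace linearity to rearrange the summand, while Part 2 is exactly the right-hand inequality in Corollary \ref{first_corollary_convex_functions} applied in the same way. The only point requiring a small verification is log-convexity itself; and within that, the sole subtlety is the nonnegativity of the cross-weights $\tr(P_iQ_j)$, which as noted follows from writing $\tr(P_iQ_j)=\tr(Q_j^{1/2}P_iQ_j^{1/2})\geq 0$.
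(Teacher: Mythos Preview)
Your proposal is correct and follows the same high-level strategy as the paper: apply Theorem~\ref{first_main_theorem} and Corollary~\ref{first_corollary_convex_functions} to $\phi(\nu)=\tr(A^{1-\nu}B^{\nu})$ with $(a,b)=(0,1)$, after checking that $\phi$ is log-convex on $\mathbb{R}$.

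The only difference is in how log-convexity of $\phi$ is established. The paper invokes the result from \cite{saboam} that $\nu\mapsto\||A^{1-\nu}XB^{\nu}\||$ is log-convex for every unitarily invariant norm, specializes to the Hilbert--Schmidt norm with $X=I$, uses $\|A^{1-\nu}B^{\nu}\|_2^{2}=\tr(A^{2-2\nu}B^{2\nu})$, and then replaces $A,B$ by $\sqrt{A},\sqrt{B}$. Your spectral-decomposition argument, writing $\phi(\nu)=\sum_{i,j}\tr(P_iQ_j)\,\lambda_i^{1-\nu}\mu_j^{\nu}$ as a nonnegative combination of log-affine functions and appealing to H\"older, is more elementary and entirely self-contained---it avoids the external reference and the square-root substitution. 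The paper's route, on the other hand, situates the trace case as a specialization of a general unitarily-invariant-norm phenomenon. Either argument suffices here; yours is arguably cleaner for this particular proposition.
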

\begin{proof}
The function $f(\nu)=\||A^{1-\nu}XB^{\nu}\||$ is log-convex on $\mathbb{R}$ for any unitarily invariant norm $\||\;\;\||.$ This fact has been shown in \cite{saboam}. In particular, the function $f(\nu)=\|A^{1-\nu}B^{\nu}\|_2$ is log-convex, where $\|\;\;\|_2$ is the Hilbert-Schmidt norm. But
$\|A^{1-\nu}B^{\nu}\|_2^2=\tr(A^{2-2\nu}B^{2\nu}).$ Therefore, replacing $A$ and $B$ by $\sqrt{A}$ and $\sqrt{B}$ implies log-convexity of the function $f(\nu)=\tr(A^{1-\nu}B^{\nu}).$ Now applying Theorem \ref{first_main_theorem} and Corollary \ref{first_corollary_convex_functions} to the function $f(\nu)=\tr(A^{1-\nu}B^{\nu})$ implies the result.
\end{proof}
In particular, when $N=1$, the first inequality above reduces to
\begin{eqnarray}\label{needed_trace}
\tr\left((1+\nu)A-\nu B\right)+\nu\;\tr(A+B-2\sqrt{A}\sqrt{B})\leq \tr\left(A^{1+\nu}B^{-\nu}\right).
\end{eqnarray}
Notice that, for $f(\nu)=\||A^{1-\nu}XB^{\nu}\||$,
\begin{eqnarray*}
\tr(\sqrt{A}\sqrt{B})&=&f(1/2)\leq\sqrt{f(0)}\sqrt{f(1)}=\sqrt{\tr A}\sqrt{\tr B},
\end{eqnarray*}
where we have used log-convexity of $f$. When this is considered in \eqref{needed_trace}, we get
$$\tr\left((1+\nu)A-\nu B\right)+\nu\;(\tr A+\tr B-2\sqrt{\tr A}\sqrt{\tr B})\leq \tr\left(A^{1+\nu}B^{-\nu}\right),$$
which is equivalent to
\begin{equation}\label{needed_trace_second}
\tr\left((1+\nu)A-\nu B\right)+\nu\;(\sqrt{\tr A}-\sqrt{\tr B})^2\leq \tr\left(A^{1+\nu}B^{-\nu}\right).
\end{equation}
In \cite{mojtaba}, it has been proven that
\begin{equation}\label{needed_from_mojtaba_trace}
\tr\left((1+\nu)A-\nu B\right)+\nu\;(\sqrt{\tr A}-\sqrt{\tr B})^2\leq \tr\left|A^{1+\nu}B^{-\nu}\right|.
 \end{equation}
 Now since $\tr\left(A^{1+\nu}B^{-\nu}\right)\leq \tr\left|A^{1+\nu}B^{-\nu}\right|,$ the inequality \eqref{needed_trace_second} implies and refines \eqref{needed_from_mojtaba_trace}. Moreover, further refinements may be obtained from Proposition \ref{proposition_trace}, by taking larger $N$.

\subsection{Inequalities related to the weighted harmonic mean}
\begin{lemma}
Let $0<x<y$ be real numbers. Then the function $f(\nu)=x!_{\nu}y$ is convex on $(-\infty,1].$
\end{lemma}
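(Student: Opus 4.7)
The plan is to rewrite $f(\nu)$ as the reciprocal of an affine function in $\nu$ and then exploit the fact that the reciprocal of a strictly positive affine function is convex on every interval where it is defined.

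First I would set $g(\nu)=(1-\nu)x^{-1}+\nu y^{-1}$, so that $f(\nu)=1/g(\nu)$ and $g$ is affine in $\nu$ with slope $g'(\nu)=y^{-1}-x^{-1}$. Rewriting, we have
\begin{equation*}
g(\nu)=\frac{1}{x}-\nu\,\frac{y-x}{xy},
\end{equation*}
so the condition $g(\nu)>0$ is equivalent to $\nu<y/(y-x)$. Since $0<x<y$ gives $y/(y-x)>1$, the interval $(-\infty,1]$ is contained in the domain where $g$ is strictly positive, and hence $f(\nu)=1/g(\nu)$ is well-defined and positive there.

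Next I would compute the derivatives directly. Differentiating $f=g^{-1}$ twice yields
\begin{equation*}
f''(\nu)=\frac{2\bigl(g'(\nu)\bigr)^{2}}{g(\nu)^{3}}=\frac{2(y^{-1}-x^{-1})^{2}}{g(\nu)^{3}}.
\end{equation*}
Since $g(\nu)>0$ on $(-\infty,1]$ by the previous step, and $(y^{-1}-x^{-1})^{2}\geq 0$, we conclude $f''(\nu)\geq 0$ on $(-\infty,1]$, which gives convexity. (In fact $f''>0$ strictly, because $x\neq y$.)

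There is no real obstacle here; the only subtlety is making sure that $g(\nu)$ does not vanish on the stated interval, which is exactly the reason the statement is restricted to $(-\infty,1]$ rather than all of $\mathbb{R}$. The same computation actually shows convexity on the maximal interval $(-\infty,\,y/(y-x))$, but $(-\infty,1]$ suffices for the applications that follow.
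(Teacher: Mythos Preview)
Your proof is correct and follows essentially the same approach as the paper: both arguments compute the second derivative of $f(\nu)=x!_{\nu}y$ and verify that it is nonnegative on $(-\infty,1]$. Your expression $f''(\nu)=2(g')^{2}/g(\nu)^{3}$ is algebraically equivalent to the paper's $f''(\nu)=\dfrac{2x(x-y)^{2}y}{(\nu(x-y)+y)^{3}}$, and you give a slightly more explicit justification for why $g(\nu)>0$ (equivalently, why the denominator is positive) precisely on $(-\infty,\,y/(y-x))\supset(-\infty,1]$, which the paper leaves implicit.
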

\begin{proof}
Direct computations show that $$f''(\nu)=\frac{2 x (x - y)^2 y}{(\nu (x - y) + y)^3}.$$ When $x<y$ and $\nu\leq 1,$ we easily see that $f''(\nu)>0,$ completing the proof.
\end{proof}
\begin{proposition}
Let $0<x<y$ be real numbers. If $\nu\geq 0,$ then $x\nabla_{-\nu}y\leq x!_{-\nu}y.$
\end{proposition}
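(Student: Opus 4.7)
The plan is to apply Lemma~\ref{lemma_affine} — or rather the ``outside'' half of \eqref{property_of_convex_outside} that it rests on — to the function $f(\nu) = x!_\nu y$ with $a = 0$ and $b = 1$. The preceding lemma already supplies the convexity of $f$ on $(-\infty,1]$, so the only content left is to recognize the two sides of the claimed inequality as the secant value and the function value of $f$ at the same point.

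First I would record that $f(0) = x$ and $f(1) = y$, and then read off from \eqref{def_of_affine} the identity
\[
L_{f,0,1}(-\nu) = \frac{1-(-\nu)}{1-0}\,f(0) + \frac{-\nu-0}{1-0}\,f(1) = (1+\nu)x - \nu y = x\nabla_{-\nu}y,
\]
while by definition $f(-\nu) = x!_{-\nu}y$. Hence the desired inequality $x\nabla_{-\nu}y \leq x!_{-\nu}y$ is exactly $L_{f,0,1}(-\nu) \leq f(-\nu)$, i.e.\ the assertion that the graph of $f$ lies above its secant through $\bigl(0,f(0)\bigr)$ and $\bigl(1,f(1)\bigr)$ at the point $-\nu$.

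For $\nu \geq 0$ the test point $-\nu$ lies to the left of $[0,1]$, so $-\nu \in (-\infty,1] \setminus (0,1)$, and all three relevant points $-\nu, 0, 1$ lie in the convexity interval $(-\infty,1]$ of $f$. This places us exactly in the hypothesis of Lemma~\ref{lemma_affine} (modulo the technicality discussed below), and the conclusion $(1+\nu)f(0) - \nu f(1) \leq f(-\nu)$ is precisely what we want.

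The only point that needs a brief comment is that Lemma~\ref{lemma_affine} is stated for functions convex on all of $\mathbb{R}$, whereas our $f$ is convex only on $(-\infty,1]$. This is not a real obstacle: the argument used in the proof of Lemma~\ref{lemma_affine} is local, requiring convexity only on any interval that contains the endpoints $a,b$ and the test point $(1+\nu)a - \nu b$. Here $(-\infty,1]$ does contain $\{-\nu,0,1\}$, so the same three-line argument — express $0$ as a convex combination of $-\nu$ and $1$, apply convexity, and rearrange — goes through verbatim and completes the proof.
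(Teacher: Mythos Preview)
Your proof is correct and follows essentially the same approach as the paper: apply Lemma~\ref{lemma_affine} to $f(\nu)=x!_\nu y$ with $a=0$, $b=1$, identify the secant value at $-\nu$ with $x\nabla_{-\nu}y$, and note that the technicality about convexity on all of $\mathbb{R}$ versus $(-\infty,1]$ is harmless since all three points $-\nu,0,1$ lie in the convexity interval. The paper makes the same observation in a remark immediately following its proof.
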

\begin{proof}
Notice that when $\nu\geq 0$ and $a<b$, we have $(1+\nu)a-\nu b\leq a.$ Consequently, by letting $a=0$ and $b=1,$ we have $(1+\nu)a-\nu b\leq 0.$ Since $f(\nu)=x!_{\nu}y$ is convex when $\nu<0$, it follows from Lemma \ref{lemma_affine} that
\begin{eqnarray*}
x!_{-\nu}y=f((1+\nu)a-\nu b)&\geq& L_{f,0,1}(\nu)=x\nabla_{-\nu}y.
\end{eqnarray*}
\end{proof}

We remark that in order to fully use Lemma \ref{lemma_affine}, $f$ must be convex on $\mathbb{R}$. However, if $f$ is convex only on $(-\infty,a]$, we may apply the lemma only if $(1+\nu)a-\nu b\leq a,$ which is guaranteed because $\nu\geq 0$ and $a<b$.

Then applying Theorem \ref{first_main_theorem} to the function $f(\nu)=x!_{\nu}y$ implies the following refined version.
\begin{corollary}
Let $0<x<y$ be real numbers and $N\in\mathbb{N}$. If $\nu\geq 0,$ then
\begin{equation}\label{refined_reverse_har_arith}
x\nabla_{-\nu}y+\sum_{j=1}^{N}2^{j}\nu\left(x\nabla \left(x!_{2^{1-j}}y\right)-x!_{2^{-j}}y\right)\leq x!_{-\nu}y.
\end{equation}
\end{corollary}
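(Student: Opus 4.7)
The plan is to apply Theorem~\ref{first_main_theorem} to the function $f(\nu)=x!_{\nu}y$ with endpoints $a=0$ and $b=1$. The preceding lemma gives that $f$ is convex on $(-\infty,1]$, which is weaker than the hypothesis of Theorem~\ref{first_main_theorem}, so the first task will be to verify that this local convexity still suffices in the present setting.

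With $a=0$ and $b=1$ a direct computation gives $f(0)=x$, $f(1)=y$, $f(-\nu)=x!_{-\nu}y$, together with
\[
f\!\left(\tfrac{(2^{j-1}-1)a+b}{2^{j-1}}\right)=f(2^{1-j})=x!_{2^{1-j}}y,\qquad f\!\left(\tfrac{(2^{j}-1)a+b}{2^{j}}\right)=f(2^{-j})=x!_{2^{-j}}y
\]
for $j\geq 1$. Then $(1+\nu)f(0)-\nu f(1)=(1+\nu)x-\nu y=x\nabla_{-\nu}y$, and the $j$-th bracket appearing in Theorem~\ref{first_main_theorem} is exactly $x\nabla\bigl(x!_{2^{1-j}}y\bigr)-x!_{2^{-j}}y$. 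Substituting these identifications term-by-term reproduces the desired inequality \eqref{refined_reverse_har_arith}.

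To justify using Theorem~\ref{first_main_theorem} with only local convexity, I would revisit its inductive proof with $a=0$, $b=1$, $\nu\geq 0$. Every application of Lemma~\ref{lemma_affine} inside the induction occurs on an interval $[0,b^{(k)}]$ with $b^{(k)}\in\{1,\tfrac12,\tfrac14,\ldots\}$ and at an evaluation point $(1+r^{(k)})\cdot 0-r^{(k)}b^{(k)}=-\nu\leq 0$. Thus every point at which $f$ is evaluated, and every interval on which the chord-above-convex-function property \eqref{property_of_convex_outside} is invoked, lies in $(-\infty,1]$, which is precisely the region of convexity of $f$. This is the same workaround the author already employs in the proof of the preceding proposition. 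The main obstacle is exactly this local-versus-global convexity issue; once it is settled the conclusion follows from the routine substitution above.
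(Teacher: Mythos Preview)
Your proposal is correct and follows exactly the approach the paper takes: apply Theorem~\ref{first_main_theorem} to $f(\nu)=x!_{\nu}y$ with $a=0$, $b=1$, and read off the identifications $f(0)=x$, $f(1)=y$, $f(2^{-j})=x!_{2^{-j}}y$, $f(-\nu)=x!_{-\nu}y$. Your added paragraph verifying that the inductive proof of Theorem~\ref{first_main_theorem} only ever uses values of $f$ in $(-\infty,1]$ is a welcome elaboration of the local-convexity workaround that the paper only spells out for Lemma~\ref{lemma_affine} (in the remark preceding this corollary) and otherwise leaves implicit.
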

For example, when $N=1$, this reduces to
$$x\nabla_{-\nu}y+2\nu(x\nabla y-x!y)\leq x!_{-\nu}y, x<y.$$
An operator version of this inequality may be obtained as follows.
\begin{corollary}
Let $A,B\in\mathbb{M}_n^{++}$ be such that $A\leq B.$ Then for $\nu\geq 0$ and $N\in\mathbb{N}$, we have
$$A\nabla_{-\nu}B+\sum_{j=1}^{N}2^{j}\nu\left(A\nabla \left(A!_{2^{1-j}}B\right)-A!_{2^{-j}}B\right)\leq A!_{-\nu}B.$$
\end{corollary}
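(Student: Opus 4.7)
The plan is to lift the scalar refinement \eqref{refined_reverse_har_arith} to matrices using Lemma \ref{monotone}, following exactly the two-step template that produced Proposition \ref{first_prop_operators_order}. I would first specialize \eqref{refined_reverse_har_arith} to $x=1$ and $y=t$, which is legitimate for every $t>1$, and observe by direct inspection that both sides coincide at $t=1$ (every mean in sight collapses to $1$ there); this gives a single scalar inequality between continuous functions of $t$ on $[1,\infty)$.

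Next, set $X:=A^{-\frac{1}{2}}BA^{-\frac{1}{2}}$. The hypothesis $A\leq B$ combined with $A\in\mathbb{M}_n^{++}$ forces $\text{Sp}(X)\subset[1,\infty)$, so Lemma \ref{monotone} applies and promotes the scalar inequality to an operator inequality in $X$. Here each scalar weighted harmonic mean $1!_\alpha t=((1-\alpha)+\alpha t^{-1})^{-1}$ becomes the self-adjoint operator $((1-\alpha)I+\alpha X^{-1})^{-1}$ via continuous functional calculus, while $1\nabla_{-\nu}t$ becomes $(1+\nu)I-\nu X$.

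Finally, I would conjugate both sides by $A^{\frac{1}{2}}$. The identities
$$A^{\frac{1}{2}}\bigl((1-\alpha)I+\alpha X\bigr)A^{\frac{1}{2}}=A\nabla_\alpha B,\qquad A^{\frac{1}{2}}\bigl((1-\alpha)I+\alpha X^{-1}\bigr)^{-1}A^{\frac{1}{2}}=A!_\alpha B,$$
which follow instantly from $B=A^{\frac{1}{2}}XA^{\frac{1}{2}}$ and $B^{-1}=A^{-\frac{1}{2}}X^{-1}A^{-\frac{1}{2}}$, convert each inner summand $\tfrac12\bigl(I+(1!_{2^{1-j}}X)\bigr)-(1!_{2^{-j}}X)$ into $A\nabla(A!_{2^{1-j}}B)-A!_{2^{-j}}B$, and similarly for the extremal terms, producing the claimed inequality. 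The only delicate point, and the one that governs the hypothesis, is the spectral constraint: the scalar inequality \eqref{refined_reverse_har_arith} was established only for $0<x<y$, so without the assumption $A\leq B$ the spectrum of $X$ could drop below $1$ and the pointwise premise of Lemma \ref{monotone} would no longer be verified.
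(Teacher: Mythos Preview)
Your proposal is correct and follows exactly the paper's own argument: set $x=1$ in \eqref{refined_reverse_har_arith}, apply Lemma \ref{monotone} with $X=A^{-1/2}BA^{-1/2}$, and conjugate by $A^{1/2}$. You have simply fleshed out the details the paper leaves implicit, in particular the justification that $A\leq B$ forces ${\rm Sp}(X)\subset[1,\infty)$ and the boundary check at $t=1$.
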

\begin{proof}
This follows by letting $x=1$ in \eqref{refined_reverse_har_arith}, then applying Lemma \ref{monotone}, using $X=A^{-\frac{1}{2}}BA^{-\frac{1}{2}}.$
\end{proof}
On the other hand, noting that $f(\nu)=x!_{\nu}y$ is log-convex on $(-\infty,1]$ when $0<x<y,$ then applying Corollary \ref{first_corollary_convex_functions}, we obtain the following refinement of the reverse harmonic-geometric mean inequality.
\begin{proposition}
Let $0<x<y$ and $\nu\geq 0.$ The for $N\in\mathbb{N}$, we have
\begin{eqnarray*}
(x\#_{-\nu}y)\leq(x\#_{-\nu}y)\prod_{j=1}^{N}\left(\frac{\sqrt{x(x!_{2^{1-j}}y)}}{x!_{2^{-j}}y}\right)^{2^j\nu}\leq x!_{-\nu}y.
\end{eqnarray*}
\end{proposition}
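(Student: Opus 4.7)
The plan is to apply Corollary~\ref{first_corollary_convex_functions} to the function $f(\nu) = x!_\nu y$ with $a=0$ and $b=1$, in direct parallel with how Proposition~\ref{refinement_log_convex_mojtaba} and Proposition~\ref{proposition_trace} were handled. The only preliminary ingredient is log-convexity of $f$, which the text asserts on $(-\infty,1]$ just before the statement. I would verify this by rewriting $f(\nu) = xy/(y + \nu(x-y))$, so that $\log f(\nu) = \log(xy) - \log(y+\nu(x-y))$, and differentiating twice to obtain $(\log f)''(\nu) = (y-x)^2/(y+\nu(x-y))^2$. This is strictly positive whenever $y + \nu(x-y) > 0$; since $0 < x < y$, the denominator is positive throughout $(-\infty, y/(y-x))$, an interval which contains $(-\infty,1]$ because $y/(y-x) > 1$.

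With log-convexity in hand, I would match each piece of the corollary against the target inequality. At the endpoints $a=0$, $b=1$, one has $f(0) = x!_0 y = x$ and $f(1) = x!_1 y = y$, so that $f^{1+\nu}(0)f^{-\nu}(1) = x^{1+\nu}y^{-\nu} = x\#_{-\nu}y$, while $f((1+\nu)\cdot 0 - \nu\cdot 1) = f(-\nu) = x!_{-\nu}y$. The intermediate arguments simplify to $\frac{(2^{j-1}-1)\cdot 0 + 1}{2^{j-1}} = 2^{1-j}$ and $\frac{(2^j-1)\cdot 0 + 1}{2^j} = 2^{-j}$, producing the factors $\sqrt{x(x!_{2^{1-j}}y)}/(x!_{2^{-j}}y)$ inside the product. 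Substituting everything into Corollary~\ref{first_corollary_convex_functions} reproduces the stated chain of inequalities verbatim.

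The main obstacle I anticipate, and the only genuinely subtle point, is the domain issue: Corollary~\ref{first_corollary_convex_functions} is formulated for functions log-convex on all of $\mathbb{R}$, but here log-convexity is only available on $(-\infty,1]$. I would resolve this exactly as the paper has just done in the analogous convex case (the remark following the harmonic-mean convexity lemma): since $\nu \geq 0$, the single extrapolated point is $(1+\nu)\cdot 0 - \nu\cdot 1 = -\nu \leq 0$, and all intermediate evaluation points $2^{-j}, 2^{1-j}$ lie in $(0,1]$, so the whole inductive argument underlying Theorem~\ref{first_main_theorem}, and hence Corollary~\ref{first_corollary_convex_functions}, operates entirely within the interval of log-convexity. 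Once this localization is recorded, no further work is required and the proof is a one-line application of the corollary.
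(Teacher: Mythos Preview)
Your proposal is correct and follows exactly the paper's approach: the paper simply notes that $f(\nu)=x!_{\nu}y$ is log-convex on $(-\infty,1]$ when $0<x<y$ and then invokes Corollary~\ref{first_corollary_convex_functions} with $a=0$, $b=1$. You have merely supplied the details the paper omits (the second-derivative check for log-convexity and the domain localization), so there is nothing to add.
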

Let us investigate this proposition, when $N=1.$ This gives, when $0<x<y$ and $\nu\geq 0,$
\begin{equation}\label{needed_for_kanto}
(x\#_{-\nu}y)\left(\frac{\sqrt{xy}}{x!y}\right)^{2\nu}\leq x!_{-\nu}y\Rightarrow (x\#_{-\nu}y) \left(\frac{x\nabla y}{x\#y}\right)^{2\nu}\leq x!_{-\nu}y.
\end{equation}
Interestingly, $ \left(\frac{x\nabla y}{x\#y}\right)^2=K\left(\frac{y}{x},2\right);$ the Kantorovich constant defined for $t>0$ by $K(t,2)=\frac{(t+1)^2}{4t}.$ Consequently, \eqref{needed_for_kanto} may be written as
\begin{equation}\label{needed_second_kant}
(x\#_{-\nu}y)K\left(\frac{y}{x},2\right)^{\nu}\leq x!_{-\nu}y.
\end{equation}

We remark that recent studies of the arithmetic-harmonic mean inequality have investigated possible refinements invoking the Kantorovich constant. For example, it is shown in \cite{liao} that, for $x,y>0,$
\begin{eqnarray*}\label{reverse_arith_har_kanto}
x\nabla_{\nu}y\leq K(h,2)x!_{\nu}y\leq K(h,2)^{1-r}x\#_{\nu}y,
\end{eqnarray*}
where $h=\frac{x}{y}, 0\leq \nu\leq 1$ and $r=\min\{\nu,1-\nu\}.$ Thus, our inequality \eqref{needed_second_kant} provides a reveral of $K(h,2)x!_{\nu}y\leq K(h,2)^{1-r}x\#_{\nu}y$ that is valid for $\nu\geq 0.$

The following is an interesting operator version of \eqref{needed_for_kanto}.
\begin{theorem}
Let $A\leq B$ be in $\mathbb{M}_n^{++}$ and $\nu\geq 0.$ If $B^{-1}A+A^{-1}B\in\mathbb{M}_n^{+}$, then
\begin{eqnarray*}
(A\#_{-\nu}B)\left(\frac{B^{-1}A+2I+A^{-1}B}{4}\right)^{\nu}\leq A!_{-\nu}B.
\end{eqnarray*}
\end{theorem}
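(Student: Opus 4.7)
My plan is to derive the claimed operator inequality from the scalar version \eqref{needed_for_kanto} by applying Lemma \ref{monotone} to the positive operator $T:=A^{-1/2}BA^{-1/2}$, and then transferring the resulting scalar-operator inequality to one involving $A,B$ by a congruence with $A^{1/2}$.

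First, I would specialise \eqref{needed_for_kanto} to $x=1$, $y=t$, $t>1$, and use the algebraic identity $(1+t)^{2}=t(t+2+t^{-1})$ to rewrite the resulting inequality in the form
\begin{equation*}
t^{-\nu}\left(\tfrac{t+2+t^{-1}}{4}\right)^{\nu} \leq \bigl((1+\nu)-\nu t^{-1}\bigr)^{-1}, \qquad t>1,\;\nu\geq 0.
\end{equation*}
Since $A\leq B$ forces the spectrum of $T$ to lie in $[1,\infty)$, Lemma \ref{monotone} immediately lifts this scalar inequality to an operator inequality relating $T^{-\nu}$, $Y^{\nu}:=\bigl((T+2I+T^{-1})/4\bigr)^{\nu}$, and $\bigl((1+\nu)I-\nu T^{-1}\bigr)^{-1}$, with all three quantities being commuting functions of the single positive operator $T$.

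Next I would left- and right-multiply this operator inequality by $A^{1/2}$. The right-hand side transforms into $\bigl((1+\nu)A^{-1}-\nu B^{-1}\bigr)^{-1}=A!_{-\nu}B$ via the standard identity $A^{1/2}P^{-1}A^{1/2}=\bigl(A^{-1/2}PA^{-1/2}\bigr)^{-1}$ applied to $P=(1+\nu)I-\nu T^{-1}$, together with $A^{-1/2}T^{-1}A^{-1/2}=B^{-1}$. For the left-hand side, the key trick is to insert $A^{1/2}A^{-1/2}=I$ between the two commuting factors $T^{-\nu}$ and $Y^{\nu}$, producing
\begin{equation*}
\bigl(A^{1/2}T^{-\nu}A^{1/2}\bigr)\cdot\bigl(A^{-1/2}Y^{\nu}A^{1/2}\bigr) \;=\; (A\#_{-\nu}B)\cdot\bigl(A^{-1/2}YA^{1/2}\bigr)^{\nu},
\end{equation*}
where the last equality uses the similarity identity $A^{-1/2}Y^{\nu}A^{1/2}=(A^{-1/2}YA^{1/2})^{\nu}$, valid because $Y\in\mathbb{M}_n^{++}$. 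The direct computations $A^{-1/2}TA^{1/2}=A^{-1}B$ and $A^{-1/2}T^{-1}A^{1/2}=B^{-1}A$ then give $A^{-1/2}YA^{1/2}=(B^{-1}A+2I+A^{-1}B)/4$, matching the stated bound.

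The main subtlety is the role of the hypothesis $B^{-1}A+A^{-1}B\in\mathbb{M}_n^{+}$: the conjugated expression $(B^{-1}A+2I+A^{-1}B)/4$ is not manifestly self-adjoint, but the hypothesis ensures that it is a positive operator, so its $\nu$-th power is unambiguously defined as a positive operator and coincides with the similarity $(A^{-1/2}YA^{1/2})^{\nu}$ arising above. This in turn guarantees that the product $(A\#_{-\nu}B)\bigl((B^{-1}A+2I+A^{-1}B)/4\bigr)^{\nu}$ is self-adjoint (since it factors through $A^{1/2}T^{-\nu}Y^{\nu}A^{1/2}$, with $T^{-\nu}$ and $Y^{\nu}$ commuting as functions of $T$), so that the operator inequality in the conclusion has its intended meaning.
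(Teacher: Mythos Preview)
Your proposal is correct and follows essentially the same route as the paper's own proof: specialise \eqref{needed_for_kanto} at $x=1$, apply Lemma~\ref{monotone} with $T=A^{-1/2}BA^{-1/2}$ (whose spectrum lies in $[1,\infty)$ since $A\leq B$), then conjugate by $A^{1/2}$ and use the similarity identity $A^{-1/2}Y^{\nu}A^{1/2}=(A^{-1/2}YA^{1/2})^{\nu}$ to recover the stated form. Your added remark on why the hypothesis $B^{-1}A+A^{-1}B\in\mathbb{M}_n^{+}$ is needed---to ensure the left-hand side is self-adjoint so that the Loewner order makes sense---is a welcome clarification that the paper leaves implicit.
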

\begin{proof}
In \eqref{needed_for_kanto}, let $x=1$ and simplify to get
\begin{eqnarray*}
\frac{1}{4^{\nu}}y^{-\nu}(y^{-1}+y+2)^{\nu}\leq \left((1+\nu)-\nu y^{-1}\right)^{-1}, y\geq 1.
\end{eqnarray*}
For $A\leq B$ in $\mathbb{M}_n^{+},$ let $X=A^{-\frac{1}{2}}BA^{-\frac{1}{2}}.$ Then ${\text{Sp}}(X)\subset [1,\infty),$ and we may apply Lemma \ref{monotone}, to get
\begin{eqnarray}
\nonumber&&\frac{1}{4^{\nu}}\left(A^{-\frac{1}{2}}BA^{-\frac{1}{2}}\right)^{-\nu}
\left(A^{\frac{1}{2}}B^{-1}A^{\frac{1}{2}}+A^{-\frac{1}{2}}BA^{-\frac{1}{2}}+2I\right)^{\nu}
\leq \left((1+\nu) I-\nu A^{\frac{1}{2}}B^{-1}A^{\frac{1}{2}}\right)^{-1}.\\
\label{needed_har_ope}
\end{eqnarray}
Now,
\begin{eqnarray}
\nonumber \left(A^{\frac{1}{2}}B^{-1}A^{\frac{1}{2}}+A^{-\frac{1}{2}}BA^{-\frac{1}{2}}+2I\right)^{\nu}&=&\left[A^{\frac{1}{2}}
\left(B^{-1}A+A^{-1}B+2I\right)A^{-\frac{1}{2}}\right]^{\nu}\\
\label{needed_third}&=&A^{\frac{1}{2}}\left(B^{-1}A+A^{-1}B+2I\right)^{\nu}A^{-\frac{1}{2}},
\end{eqnarray}
and
\begin{eqnarray}
\nonumber \left((1+\nu) I-\nu A^{\frac{1}{2}}B^{-1}A^{\frac{1}{2}}\right)^{-1}&=&
\left(A^{\frac{1}{2}}\left((1+\nu) A^{-1}-\nu B^{-1}\right)A^{\frac{1}{2}}\right)^{-1}\\
\label{needed_fourth}&=&A^{-\frac{1}{2}}\left(A!_{-\nu}B\right)A^{-\frac{1}{2}}.
\end{eqnarray}
Substituting \eqref{needed_third} and \eqref{needed_fourth} in \eqref{needed_har_ope}, we get
\begin{eqnarray*}
&&\frac{1}{4^{\nu}}\left(A^{-\frac{1}{2}}BA^{-\frac{1}{2}}\right)^{-\nu}A^{\frac{1}{2}}\left(B^{-1}A+A^{-1}B+2I\right)^{\nu}A^{-\frac{1}{2}}\leq A^{-\frac{1}{2}}\left(A!_{-\nu}B\right)A^{-\frac{1}{2}},
\end{eqnarray*}
which completes the proof, upon multiplying both sides by $A^{\frac{1}{2}}$ from both directions.
\end{proof}

\subsection{The Heinz means}
Recall that the function $$f(\nu)=\||A^{\nu}XB^{1-\nu}+A^{1-\nu}XB^{\nu}\||, A, B\in\mathbb{M}_n^{+}, X\in\mathbb{M}_n$$ is convex on $[0,1]$. To be able to apply Theorem \ref{first_main_theorem}, we need to prove convexity on $\mathbb{R}$, which we do first.
\begin{theorem}
Let $A,B\in\mathbb{M}_n^{++}$ and $X\in\mathbb{M}_n$. Then the function $f(\nu)=\||A^{\nu}XB^{1-\nu}+A^{1-\nu}XB^{\nu}\||$ is convex on $\mathbb{R}.$
\end{theorem}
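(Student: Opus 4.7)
The plan is to deduce convexity on all of $\mathbb{R}$ from the recalled convexity on $[0,1]$ of the Heinz-type norm function (valid for any positive $\tilde A,\tilde B$ and any $Y$) by means of a parameter rescaling that stretches an arbitrary symmetric interval $[\tfrac{1}{2}-c,\tfrac{1}{2}+c]$ onto $[0,1]$. The point is that the symmetry $f(\nu)=f(1-\nu)$ fixes $\nu=\tfrac{1}{2}$, and raising $A,B$ to the power $2c$ is precisely what is needed to reparametrize such a symmetric interval as $[0,1]$ in a new Heinz-type problem.

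Concretely, fix $c>0$ and introduce the auxiliary data $\tilde A=A^{2c}$, $\tilde B=B^{2c}$, which still lie in $\mathbb{M}_n^{++}$, together with $X'=A^{1/2-c}XB^{1/2-c}\in\mathbb{M}_n$. For $s\in[0,1]$ set $\nu(s)=(\tfrac{1}{2}-c)+2cs$, so that $\nu(s)$ runs affinely over $[\tfrac{1}{2}-c,\tfrac{1}{2}+c]$ as $s$ runs over $[0,1]$. A direct computation with exponents yields
\begin{equation*}
\tilde A^{s}X'\tilde B^{1-s}=A^{2cs}A^{1/2-c}XB^{1/2-c}B^{2c(1-s)}=A^{\nu(s)}XB^{1-\nu(s)},
\end{equation*}
and symmetrically $\tilde A^{1-s}X'\tilde B^{s}=A^{1-\nu(s)}XB^{\nu(s)}$. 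Consequently
\begin{equation*}
f(\nu(s))=\||\tilde A^{s}X'\tilde B^{1-s}+\tilde A^{1-s}X'\tilde B^{s}\||,
\end{equation*}
which is exactly the Heinz norm function attached to the triple $(\tilde A,\tilde B,X')$, and is therefore convex on $[0,1]$ by the recalled result. Since $s\mapsto\nu(s)$ is an affine bijection, $f$ is convex on $[\tfrac{1}{2}-c,\tfrac{1}{2}+c]$.

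Finally, given arbitrary $\nu_1,\nu_2\in\mathbb{R}$ and $t\in[0,1]$, I choose $c$ so large that both $\nu_1,\nu_2\in[\tfrac{1}{2}-c,\tfrac{1}{2}+c]$; convexity on that interval then gives $f(t\nu_1+(1-t)\nu_2)\leq tf(\nu_1)+(1-t)f(\nu_2)$. Hence $f$ is convex on all of $\mathbb{R}$. There is no serious obstacle here: once one recognizes that the center $\tfrac{1}{2}$ is the natural fixed point of the symmetry of $f$, the rescaling $A,B\rightsquigarrow A^{2c},B^{2c}$ together with the left–right conjugation of $X$ by $A^{1/2-c}$ and $B^{1/2-c}$ is forced, and the verification reduces to routine exponent bookkeeping, the only mildly subtle step being to confirm that the two summands transform into the correct symmetric pair $A^{\nu}XB^{1-\nu}+A^{1-\nu}XB^{\nu}$ so that the $[0,1]$ convexity can be applied directly.
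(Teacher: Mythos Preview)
Your argument is correct, and it is considerably cleaner than the paper's. Both proofs rely on the same underlying trick: substituting $C=A^{\alpha}$, $D=B^{\alpha}$, $Z=A^{\beta}XB^{\beta}$ into the known convexity of $g(\mu)=\||C^{\mu}ZD^{1-\mu}+C^{1-\mu}ZD^{\mu}\||$ on $[0,1]$. The difference is how the parameters are chosen. The paper fixes a pair $\nu_{1},\nu_{2}$ and uses the midpoint-convexity criterion, splitting into two cases according to whether each $\nu_{i}$ lies in $[0,1]$; in each case it selects $\alpha,\beta$ and the auxiliary $\mu_{1},\mu_{2}\in[0,1]$ via nonlinear (M\"obius-type) formulas such as $\mu_{i}=\nu_{i}/(2\nu_{i}-1)$, and then verifies a battery of exponent identities. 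You instead make a single uniform choice $\alpha=2c$, $\beta=\tfrac12-c$ and observe that the resulting reparametrization $s\mapsto\nu(s)=(\tfrac12-c)+2cs$ is affine, so convexity on $[0,1]$ transfers directly to convexity on the symmetric interval $[\tfrac12-c,\tfrac12+c]$; letting $c\to\infty$ gives convexity on all of $\mathbb{R}$ with no case analysis. Your route buys a much shorter proof with a single transparent substitution; the paper's route, while correct, obscures the essentially affine nature of the reduction.
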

\begin{proof}
Since $f$ is continuous, it suffices to prove that $$f\left(\frac{\nu_1+\nu_2}{2}\right)\leq \frac{f(\nu_1)+f(\nu_2)}{2}, \nu_1,\nu_2\in\mathbb{R}.$$ If $C,D\in\mathbb{M}_n^{+}$ and $Z\in\mathbb{M}_n$, the function $$g(\nu)=\||C^{\nu}ZD^{1-\nu}+C^{1-\nu}ZD^{\nu}\||$$ is convex on $[0,1],$ and hence $$g\left(\frac{\mu_1+\mu_2}{2}\right)\leq\frac{g(\mu_1)+g(\mu_2)}{2}\;{\text{when}}\;\mu_1,\mu_2\in [0,1].$$ That is
\begin{eqnarray}
\nonumber g\left(\frac{\mu_1+\mu_2}{2}\right)&=&\left\|\left|C^{\frac{\mu_1+\mu_2}{2}}ZD^{1- \frac{\mu_1+\mu_2}{2}}+   C^{1-\frac{\mu_1+\mu_2}{2}}ZD^{ \frac{\mu_1+\mu_2}{2}}  \right|\right\|\\
\nonumber&\leq&\frac{\left\|\left|C^{\mu_1}ZD^{1- \mu_1}+   C^{1-\mu_1}ZD^{\mu_1}  \right|\right\|+   \left\|\left|C^{\mu_2}ZD^{1- \mu_2}+   C^{1-\mu_2}ZD^{\mu_2}  \right|\right\|                      }{2}.\\
\label{needed_in_heinz}
\end{eqnarray}
Now we discuss two cases.\\
{\bf{Case 1:}} For $\nu_1,\nu_2\not\in [0,1]$, let $\mu_1=\frac{\nu_1}{2\nu_1-1},\mu_2=\frac{\nu_2}{2\nu_2-1}$. Then clearly $\mu_1,\mu_2\in [0,1].$  Now noting
$$(2\nu_1-1)(2\nu_2-1)\left(\frac{\mu_1+\mu_2}{2}\right)+\nu_1+\nu_2-2\nu_1\nu_2=\frac{\nu_1+\nu_2}{2},$$
$$(2\nu_1-1)(2\nu_2-1)\left(1-\frac{\mu_1+\mu_2}{2}\right)+\nu_1+\nu_2-2\nu_1\nu_2=1-\frac{\nu_1+\nu_2}{2},$$
$$(2\nu_1-1)(2\nu_2-1)\mu_1+\nu_1+\nu_2-2\nu_1\nu_2=\nu_2,$$
$$(2\nu_1-1)(2\nu_2-1)(1-\mu_1)+\nu_1+\nu_2-2\nu_1\nu_2=1-\nu_2,$$
$$(2\nu_1-1)(2\nu_2-1)\mu_2+\nu_1+\nu_2-2\nu_1\nu_2=\nu_1,$$
$$(2\nu_1-1)(2\nu_2-1)(1-\mu_2)+\nu_1+\nu_2-2\nu_1\nu_2=1-\nu_1,$$
and letting
\begin{eqnarray*}
C=A^{(2\nu_1-1)(2\nu_2-1)}, Z=A^{\nu_1+\nu_2-2\nu_1\nu_2}XB^{\nu_1+\nu_2-2\nu_1\nu_2}\;{\text{and}}\;D=B^{(2\nu_1-1)(2\nu_2-1)}
\end{eqnarray*}
in \eqref{needed_in_heinz}, we get
\begin{eqnarray}
\nonumber f\left(\frac{\nu_1+\nu_2}{2}\right)&=&\left\|\left|A^{\frac{\nu_1+\nu_2}{2}}XB^{1- \frac{\nu_1+\nu_2}{2}}+   A^{1-\frac{\nu_1+\nu_2}{2}}XB^{ \frac{\nu_1+\nu_2}{2}}  \right|\right\|\\
\nonumber&\leq&\frac{\left\|\left|A^{\nu_1}XB^{1- \nu_1}+   A^{1-\nu_1}XB^{\nu_1}  \right|\right\|+   \left\|\left|A^{\nu_2}XB^{1- \nu_2}+ A^{1-\nu_2}XB^{\nu_2}  \right|\right\|                      }{2}\\
\nonumber&=&\frac{f(\nu_1)+f(\nu_2)}{2},
\end{eqnarray}
which shows convexity of $f$ on $[1,\infty)$ and $(-\infty,0]$.\\
{\bf{Case 2:}} If $\nu_1\in [0,1]$ and $\nu_2\not\in [0,1].$ In this case, let $\mu_1=\frac{2\nu_2^2-2\nu_2+\nu_1}{(2\nu_2-1)^2}, \mu_2=\frac{\nu_2}{2\nu_2-1}.$ Then clearly $\mu_1,\mu_2\in [0,1],$ since $\nu_1\in [0,1]$ and $\nu_2\not\in [0,1].$ Now noting the computations
$$(2\nu_2-1)^2\left(\frac{\mu_1+\mu_2}{2}\right)+2-2\nu_2^2=\frac{\nu_1+\nu_2}{2},$$
$$(2\nu_2-1)^2\left(1-\frac{\mu_1+\mu_2}{2}\right)+2-2\nu_2^2=1-\frac{\nu_1+\nu_2}{2},$$
$$(2\nu_2-1)^2\mu_1+2-2\nu_2^2=\nu_1,(2\nu_2-1)^2(1-\mu_1)+2-2\nu_2^2=1-\nu_1,$$
$$(2\nu_2-1)^2\mu_2+2-2\nu_2^2=\nu_2,(2\nu_2-1)^2(1-\mu_2)+2-2\nu_2^2=1-\nu_2,$$
and letting
\begin{eqnarray*}
C=A^{(2\nu_2-1)^2}, Z=A^{2-2\nu_2^2}XB^{2-2\nu_2^2}\;{\text{and}}\;D=B^{(2\nu_2-1)^2}
\end{eqnarray*}
in \eqref{needed_in_heinz}, we get
\begin{eqnarray}
\nonumber f\left(\frac{\nu_1+\nu_2}{2}\right)&=&\left\|\left|A^{\frac{\nu_1+\nu_2}{2}}XB^{1- \frac{\nu_1+\nu_2}{2}}+   A^{1-\frac{\nu_1+\nu_2}{2}}XB^{ \frac{\nu_1+\nu_2}{2}}  \right|\right\|\\
\nonumber&\leq&\frac{\left\|\left|A^{\nu_1}XB^{1- \nu_1}+   A^{1-\nu_1}XB^{\nu_1}  \right|\right\|+   \left\|\left|A^{\nu_2}XB^{1- \nu_2}+ A^{1-\nu_2}XB^{\nu_2}  \right|\right\|                      }{2}\\
\nonumber&=&\frac{f(\nu_1)+f(\nu_2)}{2}.
\end{eqnarray}
Now both cases imply the convexity of $f$ on $\mathbb{R}$.
\end{proof}
Having proved convexity of $f(\nu)=\||A^{\nu}XB^{1-\nu}+A^{1-\nu}XB^{\nu}\||$ on $\mathbb{R}$, we may apply Theorem \ref{first_main_theorem} to get the following.
\begin{corollary}
Let $A,B\in\mathbb{M}_n^{++}, X\in\mathbb{M}_n$ and $\nu\geq 0$. Then for $N\in\mathbb{N},$ we have
\begin{eqnarray*}
\||AX+XB\||+\sum_{j=1}^{N}2^j\nu\left(\frac{f(0)+f(2^{1-j})}{2}-f(2^{-j})\right)\leq \||A^{-\nu}XB^{1+\nu}+A^{1+\nu}XB^{-\nu}\||.
\end{eqnarray*}
\end{corollary}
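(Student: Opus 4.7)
The plan is to apply Theorem \ref{first_main_theorem} directly to the function $f(\nu)=\||A^{\nu}XB^{1-\nu}+A^{1-\nu}XB^{\nu}\||$, which has just been proven to be convex on all of $\mathbb{R}$ (this global convexity is essential, since the endpoint $-\nu$ we will aim for lies outside $[0,1]$). I would choose the endpoints $a=0$ and $b=1$, so that $(1+\nu)a-\nu b=-\nu$ and
\[
f(-\nu)=\||A^{-\nu}XB^{1+\nu}+A^{1+\nu}XB^{-\nu}\||,
\]
which matches the right-hand side of the target inequality exactly.

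The key simplifying observation is the symmetry $f(0)=f(1)=\||AX+XB\||$ built into the very definition of $f$. With $a=0, b=1$ this makes the leading piece of Theorem \ref{first_main_theorem} collapse pleasantly:
\[
(1+\nu)f(0)-\nu f(1)=(1+\nu)\||AX+XB\||-\nu\||AX+XB\||=\||AX+XB\||,
\]
which is precisely the leading term claimed in the corollary.

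Next I would compute the arguments that appear inside the summation of Theorem \ref{first_main_theorem} for these endpoints:
\[
\frac{(2^{j-1}-1)a+b}{2^{j-1}}=\frac{1}{2^{j-1}}=2^{1-j},\qquad \frac{(2^{j}-1)a+b}{2^{j}}=\frac{1}{2^{j}}=2^{-j}.
\]
Substituting these into the sum $\sum_{j=1}^{N}2^{j}\nu\bigl[\tfrac{f(a)+f(\frac{(2^{j-1}-1)a+b}{2^{j-1}})}{2}-f(\frac{(2^{j}-1)a+b}{2^{j}})\bigr]$ converts it verbatim into $\sum_{j=1}^{N}2^{j}\nu\bigl[\tfrac{f(0)+f(2^{1-j})}{2}-f(2^{-j})\bigr]$.

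Assembling these three ingredients into Theorem \ref{first_main_theorem} yields the stated inequality immediately; there is no real obstacle beyond bookkeeping, because the convexity of $f$ on all of $\mathbb{R}$ has already been secured and the endpoint symmetry $f(0)=f(1)$ is automatic. The only point worth flagging for the reader is that, because $\nu\geq 0$ forces $-\nu\leq 0<1$, we are genuinely evaluating $f$ outside $[0,1]$, so the earlier extension of convexity from $[0,1]$ to $\mathbb{R}$ is not merely cosmetic but is exactly what makes the application legitimate.
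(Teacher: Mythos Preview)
Your proposal is correct and is exactly the approach the paper takes: it simply states that, having established convexity of $f$ on $\mathbb{R}$, one applies Theorem~\ref{first_main_theorem} with $a=0$, $b=1$. You have merely spelled out the bookkeeping (the symmetry $f(0)=f(1)$ and the evaluation of the nodes $2^{1-j},2^{-j}$) that the paper leaves implicit.
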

In particular, when $N=1,$ we obtain
$$\||AX+XB\||+2\nu\left(\||AX+XB\||-2\||\sqrt{A}X\sqrt{B}\||\right)\leq \||A^{-\nu}XB^{1+\nu}+A^{1+\nu}XB^{-\nu}\||,$$
which is a reversed version of the Heinz means inequality that
$$2\||\sqrt{A}X\sqrt{B}\||\leq\||A^{\nu}XB^{1-\nu}+A^{1-\nu}XB^{\nu}\||\leq \||AX+XB\||, 0\leq \nu\leq 1.$$
We remark that a reversed version has been proved in \cite{moslehian_two} as follows
\begin{equation}\label{reverse_heinz_from_mosl}
\||AX+XB\||\leq \||A^{\nu}XB^{1-\nu}+A^{1-\nu}XB^{\nu}\||, \nu\not\in[0,1].
\end{equation}

In the following results, we try to describe the Heinz means for $\nu\not\in[0,1],$ which has not been taken care of in the literature.
\begin{proposition}\label{heinz_p_negative_q}
Let $A,B\in\mathbb{M}_n^{++}, X\in\mathbb{M}_n$ and $0<q<p.$ Then
$$\||A^{p}XB^{-q}+A^{-q}XB^{p}\||\geq \||A^{p-q}X+XB^{p-q}\||.$$
\end{proposition}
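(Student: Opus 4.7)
The plan is to recognize both sides of the claim as values of the Heinz-type function associated to the pair $(A^{p-q}, B^{p-q})$, and then invoke the convexity of that function on $\mathbb{R}$ (which was just proved in the preceding theorem) together with the symmetry $g(0)=g(1)$.

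Concretely, I would set $C := A^{p-q}$ and $D := B^{p-q}$. Because $p-q > 0$ and $A, B \in \mathbb{M}_n^{++}$, we still have $C, D \in \mathbb{M}_n^{++}$, so the preceding theorem applies to the function
\begin{equation*}
g(\nu) := \||C^{\nu} X D^{1-\nu} + C^{1-\nu} X D^{\nu}\||,
\end{equation*}
yielding convexity of $g$ on all of $\mathbb{R}$.

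Next I would carry out two short bookkeeping computations. First, $g(0) = g(1) = \||CX + XD\|| = \||A^{p-q}X + XB^{p-q}\||$, which is exactly the right-hand side of the claim. Second, setting $\nu_0 := p/(p-q)$, one checks $(p-q)\nu_0 = p$ and $(p-q)(1-\nu_0) = -q$, so
\begin{equation*}
g(\nu_0) = \||A^{p} X B^{-q} + A^{-q} X B^{p}\||,
\end{equation*}
which is the left-hand side. Since $0 < q < p$ forces $\nu_0 > 1$, the point $\nu_0$ lies outside $[0,1]$.

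For the closing step, since $g$ is convex on $\mathbb{R}$ and $g(0) = g(1)$, the secant line $L_{g,0,1}$ is the constant function equal to $g(0)$. The outside-the-interval half of \eqref{property_of_convex_outside} then gives $g(\nu_0) \geq L_{g,0,1}(\nu_0) = g(0)$, which is the desired inequality. Equivalently, one can apply Lemma \ref{lemma_affine} with $a = 0$, $b = 1$, and $\tilde\nu = -\nu_0 < -1$, so that $(1+\tilde\nu)\cdot 0 - \tilde\nu\cdot 1 = \nu_0$ and the lemma reads $g(0) = (1+\tilde\nu)g(0) - \tilde\nu g(1) \leq g(\nu_0)$. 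No real obstacle is expected here: the convexity-on-$\mathbb{R}$ theorem does all the heavy lifting, and the remainder is just the algebraic identification of the endpoints $0, 1$ and the evaluation point $\nu_0 = p/(p-q)$.
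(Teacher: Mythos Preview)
Your argument is correct and uses the same substitution $C=A^{p-q}$, $D=B^{p-q}$, $\nu_0=p/(p-q)>1$ as the paper. The only difference is the justification of the key inequality $\||CX+XD\||\leq\||C^{\nu_0}XD^{1-\nu_0}+C^{1-\nu_0}XD^{\nu_0}\||$: the paper quotes it directly as the reversed Heinz inequality \eqref{reverse_heinz_from_mosl} from \cite{moslehian_two}, whereas you derive it from the convexity-on-$\mathbb{R}$ theorem proved immediately before, via the secant-line property \eqref{property_of_convex_outside} (or equivalently Lemma~\ref{lemma_affine}). Your route is slightly more self-contained within the paper; the paper's route is a one-line citation. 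Substantively the two proofs are the same.
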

\begin{proof}
For such $A,B,p,q$, let $C=A^{p-q}, D=B^{p-q}, \nu=\frac{p}{p-q}.$ Then $\nu>1$, and we may apply \eqref{reverse_heinz_from_mosl} as follows
\begin{eqnarray*}
\||A^{p}XB^{-q}+A^{-q}XB^{p}\||&=&\||C^{\nu}XD^{1-\nu}+C^{1-\nu}XD^{\nu}\||\\
&\geq&\||CX+XD\||\\
&=&\||A^{p-q}X+XB^{p-q}\||,
\end{eqnarray*}
which completes the proof.
\end{proof}

We present the following interpolated version, that will help prove monotonicity of the Heinz means for $\nu\in\mathbb{R}.$
\begin{proposition}\label{heinz_interpolated_negative}
Let $A,B\in\mathbb{M}_n^{++}, X\in\mathbb{M}_n$ and $0<r<q<p.$ Then
$$\||A^{p}XB^{-q}+A^{-q}XB^{p}\||\geq \||A^{p-r}XB^{-q+r}+A^{-q+r}XB^{p-r}\||.$$
\end{proposition}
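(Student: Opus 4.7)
The plan is to realise both sides as values of a single convex, symmetric Heinz-type function evaluated at two points on the same side of its axis of symmetry, and then appeal to monotonicity. Concretely, set $C=A^{p-q}$ and $D=B^{p-q}$; since $p-q>0$ and $A,B\in\mathbb{M}_n^{++}$, both $C,D$ lie in $\mathbb{M}_n^{++}$, so by the theorem just proved the function
\[
g(\nu)=\||C^{\nu}XD^{1-\nu}+C^{1-\nu}XD^{\nu}\||
\]
is convex on all of $\mathbb{R}$. A direct inspection of the defining expression shows $g(\nu)=g(1-\nu)$, so $g$ is symmetric about $\nu=1/2$.

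Next I would identify the two norms in the statement with particular values of $g$. Setting $\nu_0=p/(p-q)$ yields $\nu_0(p-q)=p$ and $(1-\nu_0)(p-q)=-q$, hence $g(\nu_0)=\||A^{p}XB^{-q}+A^{-q}XB^{p}\||$. Similarly $\mu_0=(p-r)/(p-q)$ gives $g(\mu_0)=\||A^{p-r}XB^{r-q}+A^{r-q}XB^{p-r}\||$. Because $0<r<q<p$, both $\nu_0$ and $\mu_0$ exceed $1$ (the second because $p-r>p-q>0$), and $\nu_0>\mu_0$, so both arguments sit in $[1/2,\infty)$ with $\nu_0$ farther from $1/2$ than $\mu_0$.

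The remaining step is the general fact that a convex function symmetric about $1/2$ is nondecreasing on $[1/2,\infty)$: for $1/2\leq\mu_0<\nu_0$ one writes $\mu_0=(1-\lambda)(1-\nu_0)+\lambda\,\nu_0$ with $\lambda=(\mu_0+\nu_0-1)/(2\nu_0-1)\in[0,1]$, and convexity combined with $g(1-\nu_0)=g(\nu_0)$ yields $g(\mu_0)\leq g(\nu_0)$. Applied to our $\nu_0,\mu_0$ this is exactly the desired inequality. The only real obstacle is bookkeeping — verifying that the substitutions $\nu_0,\mu_0$ correctly parametrise the target norms and that they both lie on the right-hand branch of symmetry — which becomes transparent once one notices that $p-q$ is the natural common "length" underlying the exponent patterns on both sides of the claimed inequality.
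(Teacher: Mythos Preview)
Your argument is correct, but it follows a genuinely different path from the paper's. The paper proves this proposition by a factorisation trick: it writes
\[
A^{p}XB^{-q}+A^{-q}XB^{p}=A^{p+q-r}\bigl(A^{-q+r}XB^{-q+r}\bigr)B^{-r}+A^{-r}\bigl(A^{-q+r}XB^{-q+r}\bigr)B^{p+q-r},
\]
and then applies the preceding Proposition (that $\||A^{p}XB^{-q}+A^{-q}XB^{p}\||\ge\||A^{p-q}X+XB^{p-q}\||$ for $0<q<p$, which itself rests on the reversed Heinz inequality $\||AX+XB\||\le\||A^{\nu}XB^{1-\nu}+A^{1-\nu}XB^{\nu}\||$ for $\nu\notin[0,1]$ from \cite{moslehian_two}) with parameters $(p+q-r,r)$ and the shifted middle factor $A^{-q+r}XB^{-q+r}$. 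Your route instead invokes the global convexity theorem for the Heinz function on $\mathbb{R}$, combines it with the obvious symmetry $g(\nu)=g(1-\nu)$ to extract monotonicity on $[1/2,\infty)$, and then reads off the inequality from $\nu_0>\mu_0>1$. In effect you are anticipating the final monotonicity theorem of the section (indeed, the paper's closing remark notes exactly this convexity-plus-symmetry shortcut). What your approach buys is economy: it bypasses the factorisation and the intermediate proposition entirely. What the paper's approach buys is independence from the convexity theorem --- its chain of propositions gives a second, self-contained route to monotonicity that relies only on the elementary reversed Heinz bound from the literature rather than on the full convexity result, whose proof is considerably more involved.
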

\begin{proof}
Observe that
\begin{eqnarray*}
&&\||A^{p}XB^{-q}+A^{-q}XB^{p}\||\\
&=&\||A^{p+q-r}\left(A^{-q+r}XB^{-q+r}\right)B^{-r}+A^{-r}\left(A^{-q+r}XB^{-q+r}\right)B^{p+q-r}\||\\
&\geq&\||A^{p+q-r-r}\left(A^{-q+r}XB^{-q+r}\right)+\left(A^{-q+r}XB^{-q+r}\right)B^{p+q-r-r}\||\\
&=&\||A^{p-r}XB^{-q+r}+A^{-q+r}XB^{p-r}\||,
\end{eqnarray*}
where we have applied Proposition \eqref{heinz_p_negative_q}, with $(p,q)$ replaced by $(p+q-r,r)$, and with $X$ replaced by $A^{-q+r}XB^{-q+r}.$ Notice that the assumption $0<r<q<p$ implies $p+q-r>r$, which justifies the application of Proposition \ref{heinz_p_negative_q}.
\end{proof}
The last step towards proving monotonicity of the Heinz means is the following monotonicity result for the interpolated version.

\begin{proposition}\label{interpolated_decreasing}
Let $A,B\in\mathbb{M}_n^{++}, X\in\mathbb{M}_n$ and $0<q<p.$ Then the function
$$f(r)=\||A^{p-r}XB^{-q+r}+A^{-q+r}XB^{p-r}\||$$ is decreasing on $[0,q].$
\end{proposition}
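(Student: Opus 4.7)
The plan is to reduce the monotonicity claim to a single application of Proposition \ref{heinz_interpolated_negative} (or its boundary companion Proposition \ref{heinz_p_negative_q}) by absorbing the ``smaller'' endpoint $r_1$ into the base matrices. Proposition \ref{heinz_interpolated_negative} is already a comparison of $f(0)$ with $f(r)$; the only reason it does not immediately yield monotonicity is that, in general, we want to compare two interior values $f(r_1)$ and $f(r_2)$. A translation in the parameter $r$ should fix this.

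Accordingly, fix $0 \leq r_1 < r_2 \leq q$ and set $P = p - r_1$, $Q = q - r_1$, $R = r_2 - r_1$. The hypotheses $0 < q < p$ and $0 \leq r_1 < r_2 \leq q$ force $P > Q > 0$ and $0 < R \leq Q$. Rewriting the exponents inside $f$ gives
\begin{align*}
f(r_1) &= \||A^{P} X B^{-Q} + A^{-Q} X B^{P}\||,\\
f(r_2) &= \||A^{P-R} X B^{-Q+R} + A^{-Q+R} X B^{P-R}\||.
\end{align*}

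If $r_2 < q$, then $R < Q$ and the triple $(P,Q,R)$ meets the hypothesis $0 < R < Q < P$ of Proposition \ref{heinz_interpolated_negative}; one application yields $f(r_1) \geq f(r_2)$. If instead $r_2 = q$, then $R = Q$, the norm defining $f(r_2)$ collapses to $\||A^{P-Q} X + X B^{P-Q}\||$, and the pair $(P,Q)$ meets the hypothesis $0 < Q < P$ of Proposition \ref{heinz_p_negative_q}; that proposition then gives $f(r_1) \geq f(q)$. Combining the two cases proves that $f$ is decreasing on $[0, q]$.

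The main (and essentially only) obstacle is the endpoint $r_2 = q$, where the exponent $-Q + R$ lands exactly at $0$ and Proposition \ref{heinz_interpolated_negative} no longer applies as stated; but Proposition \ref{heinz_p_negative_q} was tailored for precisely that situation, so nothing beyond careful bookkeeping is required. Aside from this boundary split, the argument is one line: translate $r \mapsto r - r_1$ and quote the previous proposition.
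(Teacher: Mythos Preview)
Your argument is correct and is essentially the paper's own proof: reparametrize by subtracting $r_1$ (your $P=p-r_1$, $Q=q-r_1$, $R=r_2-r_1$) and invoke Proposition~\ref{heinz_interpolated_negative}. The only difference is that you handle the boundary case $r_2=q$ separately via Proposition~\ref{heinz_p_negative_q}, which is appropriate since Proposition~\ref{heinz_interpolated_negative} is stated for strict inequalities $0<r<q$; the paper glosses over this point.
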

\begin{proof}
Let $0\leq r_1\leq r_2\leq q.$ Then applying Proposition \ref{heinz_interpolated_negative}, with $(p,q,r)$ replaced by $(p-r_1,-q+r_1,r_2-r_1),$ we obtain the following
\begin{eqnarray*}
f(r_1)&=&\||A^{p-r_1}XB^{-q+r_1}+A^{-q+r_1}XB^{p-r_1}\||\\
&\geq& \||A^{p-r_1-(r_2-r_1)}XB^{-q+r_1+(r_2-r_1)}+A^{-q+r_1+(r_2-r_1)}XB^{p-r_1-(r_2-r_1)}\||\\
&=&\||A^{p-r_2}XB^{-q+r_2}+A^{-q+r_2}XB^{p-r_2}\||=f(r_2).
\end{eqnarray*}
This completes the proof.
\end{proof}
We refer the reader to \cite{sab_laaa} for a recent treatment of the interpolation idea for positive powers only.\\
Now we are ready for the monotonicity of the Heinz means.
\begin{theorem}
For $A,B\in\mathbb{M}_n^{++}$ and $X\in\mathbb{M}_n$, let $f(\nu)=\||A^{\nu}XB^{1-\nu}+A^{1-\nu}XB^{\nu}|\|.$ Then $f$ is decreasing on $\left(-\infty,\frac{1}{2}\right]$ and is increasing on $\left[\frac{1}{2},\infty\right).$
\end{theorem}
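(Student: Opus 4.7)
The starting observation is the symmetry $f(\nu)=f(1-\nu)$ for every $\nu\in\mathbb{R}$, which is immediate from the very definition of $f$ (interchange the two summands). Because of this symmetry, it suffices to establish that $f$ is decreasing on $\left(-\infty,\tfrac{1}{2}\right]$; the increasing behavior on $\left[\tfrac{1}{2},\infty\right)$ then follows by reflecting through $\nu=\tfrac{1}{2}$.

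I would split $\left(-\infty,\tfrac{1}{2}\right]$ into the two pieces $\left[0,\tfrac{1}{2}\right]$ and $(-\infty,0]$. On $\left[0,\tfrac{1}{2}\right]$, decreasing behavior is the classical Heinz fact: $f$ is convex on $[0,1]$ and symmetric about $\tfrac{1}{2}$, so it must be decreasing on $\left[0,\tfrac{1}{2}\right]$ and increasing on $\left[\tfrac{1}{2},1\right]$. This reduces the problem to showing that $f$ is decreasing on $(-\infty,0]$.

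To handle $(-\infty,0]$, the plan is to invoke Proposition \ref{interpolated_decreasing} with a well-chosen choice of parameters. Given $\nu_1<\nu_2\leq 0$, set $p=1-\nu_1$ and $q=-\nu_1$, so that $0<q<p$ as required. Consider the function
\[
g(r)=\||A^{p-r}XB^{-q+r}+A^{-q+r}XB^{p-r}\||,\quad r\in[0,q],
\]
which is decreasing on $[0,q]$ by Proposition \ref{interpolated_decreasing}. A direct check shows $g(0)=f(\nu_1)$, and choosing $r=\nu_2-\nu_1$ (which lies in $[0,q]=[0,-\nu_1]$ because $\nu_1<\nu_2\leq 0$) gives $p-r=1-\nu_2$ and $-q+r=\nu_2$, hence $g(\nu_2-\nu_1)=f(\nu_2)$. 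Consequently $f(\nu_1)=g(0)\geq g(\nu_2-\nu_1)=f(\nu_2)$, proving that $f$ is decreasing on $(-\infty,0]$.

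Finally, continuity of $f$ at $\nu=0$ (and in fact throughout $\mathbb{R}$) lets us splice the two monotone pieces together to conclude that $f$ is decreasing on the whole of $\left(-\infty,\tfrac{1}{2}\right]$. Combined with the symmetry $f(\nu)=f(1-\nu)$, this yields that $f$ is increasing on $\left[\tfrac{1}{2},\infty\right)$. The main technical point to be careful with is the parameter arithmetic in the reduction to Proposition \ref{interpolated_decreasing}: one must verify both that the substituted $r$ lies in the admissible interval $[0,q]$ and that the two exponent pairs $(p-r,-q+r)$ reproduce exactly $(1-\nu_2,\nu_2)$; everything else is a straightforward assembly of previously established ingredients.
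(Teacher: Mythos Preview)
Your proof is correct and uses the same three ingredients as the paper---the symmetry $f(\nu)=f(1-\nu)$, the known monotonicity on $[0,1]$, and Proposition~\ref{interpolated_decreasing}---but assembles them in a cleaner order. The paper first works on $[1,\infty)$: for $\nu\geq 1$ it sets $p=[\nu]+1$, $q=[\nu]$, $r=[\nu]+1-\nu$ (with $[\;\cdot\;]$ the floor), applies Proposition~\ref{interpolated_decreasing} separately on each unit interval $[m,m+1)$, and then invokes continuity to splice these pieces into a global increase on $[1,\infty)$; only afterwards does it reflect to cover $(-\infty,0]$. Your single parametrization $p=1-\nu_1$, $q=-\nu_1$, $r=\nu_2-\nu_1$ handles all of $(-\infty,0]$ at once, bypassing the floor function and the interval-by-interval argument entirely. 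The paper also records an alternative one-line argument via the global convexity of $f$ on $\mathbb{R}$ (proved earlier): a convex function that is decreasing on $[0,\tfrac12]$ and increasing on $[\tfrac12,1]$ must have $\tfrac12$ as its global minimizer. Either route is valid; yours is a nice streamlining of the paper's primary approach.
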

\begin{proof}
The monotonicity behavior is known on the interval $[0,1].$ \\
Now for $\nu\geq 1,$ let $p=[\nu]+1$, $q=[\nu]$ and $r=[\nu]+1-\nu,$ where $[\;\;]$ is the greatest integer function. Notice that $r\leq q<p.$ Now, with these choices of $p,q$ and $r$, we have $$f(\nu)=\||A^{\nu}XB^{1-\nu}+A^{1-\nu}XB^{\nu}|\|=\||A^{p-r}XB^{-q+r}+A^{-q+r}XB^{p-r}\||:=g(r).$$ Since $0\leq r\leq 1$ and $q\geq 1,$ it follows by Proposition \ref{interpolated_decreasing} that $g$ is decreasing as a function of $r=[\nu]+1-\nu$. Since $r$ is a decreasing function of $\nu$ on the interval $[m,m+1), m\in\mathbb{N}$, and $g$ is decreasing in $r$, it follows that $f$ is increasing on $[m,m+1).$  Then continuity of $f$ implies that $f$ is increasing on $[1,\infty).$\\
For $\nu\leq 0$, we have $f(\nu)=f(1-\nu).$ Since $1-\nu\geq 1$, and $f$ is increasing on $[1,\infty),$ it follows that it is increasing in the variable $1-\nu$, or decreasing in $\nu.$ This completes the proof.
\end{proof}
We remark that a monotonicity proof can be given as follows: Since $f$ is convex on $\mathbb{R}$, it is either monotonic or there exists $c\in\mathbb{R}$ such that $f$ is decreasing on $(-\infty,c]$ and is increasing on $[c,\infty).$ But we know that $f$ is decreasing on $\left[0,\frac{1}{2}\right]$ and is increasing on $\left[\frac{1}{2},1\right]$. This means that $c=\frac{1}{2},$ which completes the proof.

\end{document}